\newcommand{\bdism}{\begin{displaymath}}
\newcommand{\edism}{\end{displaymath}}
\newcommand{\rr}{\mathbb{R}}
\newcommand{\qq}{\mathbb{Q}}
\newcommand{\nn}{\mathbb{N}}
\newcommand{\pp}{\mathbb{P}}
\newcommand{\oo}{\mathcal{O}}
\newcommand{\T}{\mathcal{T}}
\newcommand{\D}{\mathcal{D}}
\DeclareMathOperator{\lct}{lct}
\DeclareMathOperator{\LCT}{LCT}
\DeclareMathOperator{\vol}{vol}
\newtheorem{theorem}{Theorem}[section]
\newtheorem{corollary}[theorem]{Corollary}
\newtheorem{lemma}[theorem]{Lemma}
\newtheorem{definition}[theorem]{Definition}
\newtheorem{example}[theorem]{Example}
\newtheorem{question}[theorem]{Question}
\address{Department of Mathematics, Princeton University, Princeton NJ 08544-1000,
USA} \email{gdi@math.princeton.edu}
\author{\scshape Gabriele Di Cerbo}
\title{\bf On Fujita's log spectrum conjecture}
\begin{document}
\pagestyle{headings}
\begin{abstract}
We prove Fujita's log spectrum conjecture. It follows from the ACC of a suitable set 
of pseudo-effective thresholds.
\end{abstract}
\maketitle

\tableofcontents

\section{Introduction}
\pagenumbering{arabic}

In a series of papers, Fujita introduced what he called the Kodaira energy of a 
polarized variety and he used it to study the adjunction theory of polarized varieties. See \cite{Fujita1}, \cite{Fujita2}, \cite{Fujita} and \cite{Fujita3} for details.
Let us recall the definition.

\begin{definition}
Let $(X,\Delta)$ a lc pair and let $M$ be a big divisor. The Kodaira energy of $(X,\Delta,M)$ is defined by 
\bdism
\kappa\epsilon(X,\Delta,M):=-\inf\left\{ t \in \qq_{\geq 0} \:|\: \kappa(K_{X}+\Delta+t M)\geq 0\right\}.
\edism
\end{definition}
 
Of course the interesting case is when $K_{X}+\Delta$ is not pseudo-effective. 
Using techniques from the minimal model program, Fujita proved that $\kappa\epsilon(X,\Delta,M)$ is a rational 
number if $(X,\Delta)$ is log terminal, $X$ has dimension at most three and $M$ is big and nef, see Theorem 2.2 in \cite{Fujita}.
Moreover in \cite{Fujita1}, he stated the so called spectrum conjecture. It says that if $\Delta=0$ and $M$ is ample then 
the set of negative Kodaira energies is finite away from zero. The goal of this work is to prove 
an analogue of the spectrum conjecture for pairs. It is easy to see that in this case the set of Kodaira energies 
is not discrete. On the other hand, Fujita conjectured that the limit points can be reached only from below, see \cite{Fujita} and \cite{Fujita3}. This is the content of the log spectrum conjecture and it is one of the main results of this paper.

\begin{theorem}\label{spectrum}
Let $S^{ls}_{n}$ be the set of Kodaira energies where $(X,\Delta)$ is a pair with $X$ smooth, $\Delta$ is a reduced divisor with simple normal crossing support and $M$ is an ample Cartier divisor on $X$. 
Then for any real number $x<0$ the set $\left\{t\in S^{ls}_{n}\: | \: x<t<x+\epsilon \right\}$ is finite, for $\epsilon$ small enough.
\end{theorem}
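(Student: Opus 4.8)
The plan is to translate Theorem~\ref{spectrum} into an ascending chain condition for pseudo-effective thresholds and then to prove that chain condition by means of the minimal model program and the global ACC of Hacon--McKernan--Xu.

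First I would rewrite the Kodaira energy. Given an admissible triple $(X,\Delta,M)$, set
\bdism
\sigma(X,\Delta,M):=\inf\{t\in\qq_{\geq 0}\:|\:K_{X}+\Delta+tM\text{ is pseudo-effective}\}.
\edism
Because $M$ is big, $K_X+\Delta+tM$ is big for all $t\gg 0$, so $\sigma<\infty$; the pseudo-effective cone being closed, $K_X+\Delta+\sigma M$ is pseudo-effective, hence for $t>\sigma$ the divisor $K_X+\Delta+tM=(K_X+\Delta+\sigma M)+(t-\sigma)M$ is the sum of a pseudo-effective and an ample divisor, hence big, while for $t<\sigma$ it is not pseudo-effective. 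Thus $\kappa(K_X+\Delta+tM)\geq 0$ for $t>\sigma$ and $=-\infty$ for $t<\sigma$, so $\kappa\epsilon(X,\Delta,M)=-\sigma(X,\Delta,M)$. Writing $\mathcal P_n$ for the set of all thresholds $\sigma(X,\Delta,M)$ over admissible triples, we get $S^{ls}_n=\{-\sigma:\sigma\in\mathcal P_n\}$, and for $x<0$, putting $y=-x>0$, the set $\{t\in S^{ls}_n\mid x<t<x+\epsilon\}$ is the negative of $\{\sigma\in\mathcal P_n\mid y-\epsilon<\sigma<y\}$. Hence Theorem~\ref{spectrum} is \emph{equivalent} to: for every $y>0$ the set $\mathcal P_n\cap(y-\epsilon,y)$ is finite for suitably small $\epsilon$, i.e.\ no strictly increasing sequence in $\mathcal P_n$ converges.

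To prove this I would argue by contradiction: suppose $\sigma_i=\sigma(X_i,\Delta_i,M_i)$ is strictly increasing with $\sigma_i\to\sigma_\infty$; then automatically $0<\sigma_\infty<\infty$ and each $K_{X_i}+\Delta_i$ fails to be pseudo-effective. Running the $(K_{X_i}+\Delta_i)$-MMP with scaling of $M_i$, and using that this MMP terminates in the non-pseudo-effective case, we reach a Mori fibre space $\pi_i\colon Y_i\to Z_i$; one shows $\sigma_i$ is the final scaling coefficient (the last scaled divisor $K_{Y_i}+\Delta_i'+\sigma_iM_i'$ is nef and $\pi_i$-numerically trivial, hence the pullback of a nef, and so pseudo-effective, divisor, while for smaller parameters it is negative on the covering family of fibre curves). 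Restricting to a general fibre $F_i$ of $\pi_i$ and using adjunction, $(F_i,\Delta_{F_i})$ is a log Fano pair of dimension at most $n$ whose boundary coefficients lie in a fixed finite set, $N_i:=M_i'|_{F_i}$ is an ample $\qq$-Cartier divisor, and
\bdism
K_{F_i}+\Delta_{F_i}+\sigma_i N_i\equiv 0 .
\edism
It remains to extract an ACC for the coefficient $\sigma_i$ in such a numerically trivial log canonical class: choosing $m$ divisible enough and a general $G_i\in|mN_i|$ yields a log canonical pair $(F_i,\Delta_{F_i}+\tfrac{\sigma_i}{m}G_i)$ with $K_{F_i}+\Delta_{F_i}+\tfrac{\sigma_i}{m}G_i\equiv 0$ and boundary coefficients in a DCC set, so the global ACC of Hacon--McKernan--Xu forces $\tfrac{\sigma_i}{m}$, and after controlling $m$ also $\sigma_i$, into a finite set, contradicting strict monotonicity.

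The main obstacle is this last step, and more precisely the failure of boundedness: the pairs involved are log canonical but not $\epsilon$-log canonical for any fixed $\epsilon>0$ --- already $(X,\Delta)$ with $X$ smooth and $\Delta$ reduced snc has log discrepancy zero along its deepest strata --- so the family $\{(F_i,\Delta_{F_i})\}$ of Mori fibre space fibres is \emph{not bounded}, and $N_i$ need not be Cartier after the program. Hence one cannot simply combine Borisov--Alexeev--Borisov boundedness with the ACC for log canonical thresholds on a bounded family; the chain condition must instead be squeezed out intrinsically from the relation $K_{F_i}+\Delta_{F_i}+\sigma_i N_i\equiv 0$, in which $\sigma_i$ is the coefficient of a divisor that is neither prime nor Cartier. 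Making the passage to the global ACC uniform --- choosing the auxiliary multiple $m$, bounding Gorenstein indices and the coefficients of the different, and, if necessary, inducting on $\dim Z_i$ --- is the heart of the argument.
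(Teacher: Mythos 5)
Your reduction from the Kodaira energy to the pseudo-effective threshold and the rephrasing of the statement as ``no strictly increasing sequence in $\mathcal P_n$ converges'' are both correct and match the paper's logic. However, the way you then propose to prove the chain condition has a genuine gap, one that you yourself flag at the end but do not close, and the paper closes it by a different route.

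The problem is exactly where you locate it. You run the $(K_{X_i}+\Delta_i)$-MMP with scaling of $M_i$, pass to a general fibre $F_i$ of the resulting Mori fibre space, and obtain $K_{F_i}+\Delta_{F_i}+\sigma_i N_i\equiv 0$ with $N_i=M_i'|_{F_i}$. But $N_i$ is only an ample $\qq$-Cartier divisor, not Cartier, and the pairs $(F_i,\Delta_{F_i})$ need not lie in a bounded family (as you note, they are merely log canonical, not $\epsilon$-lc). So when you choose $m$ with $|mN_i|$ base point free, $m$ depends on $i$ in an uncontrolled way, and the coefficients $\sigma_i/m$ then have no reason to lie in a DCC set. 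There is no obvious way to apply Theorem~\ref{Hacon} (global ACC) to the resulting pairs. You call the step of ``controlling $m$, bounding Gorenstein indices'' the heart of the argument, but you do not supply it; in the absence of boundedness of the fibres this step cannot be carried out by the means you suggest.

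The paper avoids the Mori fibre space fibres altogether in the proof of Theorem~\ref{spectrum}. It does not carry the ample Cartier divisor $M_i$ through an MMP. Instead it works on $X_i$ itself: since $X_i$ is smooth and $\Delta_i$ is reduced snc, Fujino's effective base point freeness theorem (Theorem~1.1 of~\cite{Fujino}) shows that $H'_i:=K_{X_i}+\Delta_i+2n^2M_i$ is a globally generated ample line bundle, with the bound $2n^2$ depending only on $n$. Choosing a general $H_i\in|H'_i|$, the pair $(X_i,\Delta_i+H_i)$ is lc and $H_i$ is an effective divisor with coefficients equal to $1$. The identity
\bdism
K_{X_i}+\Delta_i+tH_i\sim(t+1)(K_{X_i}+\Delta_i)+2n^2t\,M_i
\edism
converts $\tau(X_i,\Delta_i;M_i)$ into $\tau(X_i,\Delta_i;H_i)$ via the monotone formula $\tau_i=2n^2\bigl(1-\tfrac{1}{\eta_i+1}\bigr)$, where $\eta_i:=\tau(X_i,\Delta_i;H_i)$. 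The triple $(X_i,\Delta_i;H_i)$ now lies in $\D_n(\{1\},\{1\})$, so Theorem~\ref{theo} (the ACC for pseudo-effective thresholds for effective $M$ with DCC coefficients and the lc condition along $\tau M$) applies and forces $\{\eta_i\}$ and hence $\{\tau_i\}$ to be eventually constant. All of the MMP and global-ACC machinery you invoke is indeed used in the paper, but inside the proof of Theorem~\ref{theo}, where $M$ is effective with coefficients in a DCC set from the start and the restriction to the Mori fibre retains that property; the boundedness problem you face simply does not arise there. Your proposal is effectively an attempt to re-prove Theorem~\ref{theo} under hypotheses where it is not applicable, whereas the paper's proof of Theorem~\ref{spectrum} is a reduction to Theorem~\ref{theo} via Fujino's effective base point freeness, which is the missing ingredient in your argument.
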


Fujita proved that if $\Delta=\emptyset$ and $\dim(X)\leq 3$ then the set of negative Kodaira energies is finite away from zero using 
boundedness of Fano threefolds with terminal singularities. See Theorem 3.4.1 in \cite {Fujita}. Our approach is different. We will show how techniques from the minimal model program and the recent results in \cite{HMX} allow us to prove Theorem \ref{spectrum}. We will actually prove a more general result on a set of pseudo-effective thresholds. Moreover, this will imply that the Kodaira energy is a rational number in many cases. Note that the question of the rationality of the Kodaira energy has been extensively studied. For example see \cite{Alexeev}, \cite{Araujo}, \cite{BCHM} and \cite{Demailly}.

Throughout this paper we will work with the pseudo-effective threshold instead of the Kodaira energy. It seems that the former invariant is more suitable for the minimal model program and it applies to a more general setting. Let us recall its definition. 

\begin{definition}
Let $(X,\Delta)$ a lc pair and $M$ an effective divisor such that $K_{X}+\Delta+tM$ is pseudo-effective for some $t\geq 0$. The pseudo-effective threshold $\tau(X,\Delta;M)$ of $M$ with respect to $(X,\Delta)$ is defined by
\bdism
\tau(X,\Delta;M):=\inf\left\{ t \in \rr_{\geq 0} \:|\: K_{X}+\Delta+t M \:\text{is pseudo-effective}\right\}.
\edism
\end{definition}

Note that in the case $L$ is big, the pseudo-effective threshold and the Kodaira energy are easily related by the formula
\bdism
\kappa\epsilon(X,\Delta,L)=-\tau(X,\Delta;L).
\edism

We will study an analogue of Theorem \ref{spectrum} for pseudo-effective thresholds in a slightly more general form, namely we allow the coefficients of $\Delta$ and $M$ to vary in some fixed sets. In order to simplify the presentation of our results, let us first introduce some notation.

\begin{definition}
Fix a positive integer $n$ and two subsets $I\subseteq \left[0,1\right]$ and $J\subseteq \rr$. Then we define $\D_{n}(I,J)$ to be the set of all lc pairs $(X,\Delta)$ with an effective $\rr$-divisor $M$ on $X$ such that $X$ is a projective variety of dimension $n$, the coefficients of $\Delta$ (resp. $M$) are in $I$ (resp. $J$) with $(X,\Delta+t M)$ lc and $K_{X}+\Delta+ t M$ pseudo-effective for some $t\geq 0$.
\end{definition} 

The main object of study of this paper is the following set.

\begin{definition}
Fix a positive integer $n$ and two subsets $I\subset [0,1]$ and $J\subseteq \rr$. We define the set 
\bdism
\T_{n}(I,J):=\left\{\tau(X,\Delta;M) \:|\: (X,\Delta;M)\in\D_{n}(I,J)\right\}.
\edism
\end{definition}

We can now state our main theorem.

\begin{theorem}\label{theo}
Fix a positive integer $n$ and two sets $I\subseteq [0,1]$ and $J\subseteq \rr$. If $I$ and $J$ satisfy the DCC then
$\T_{n}(I,J)$ satisfies the ACC. 
\end{theorem}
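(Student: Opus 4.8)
The plan is to argue by contradiction and by induction on $n$; the base case $n=0$ is trivial. Suppose $\T_n(I,J)$ fails the ACC, so that there is a strictly increasing sequence $\tau_i=\tau(X_i,\Delta_i;M_i)\nearrow\tau$ with $(X_i,\Delta_i;M_i)\in\D_n(I,J)$. Since at most one term vanishes, we may assume $\tau_i>0$, so $K_{X_i}+\Delta_i$ is not pseudo-effective and $M_i\ne0$; moreover $K_{X_i}+\Delta_i+\tau_iM_i$ is pseudo-effective and lies on the \emph{boundary} of the pseudo-effective cone (otherwise $K_{X_i}+\Delta_i+(\tau_i-\varepsilon)M_i$ would still be big), so $\vol(K_{X_i}+\Delta_i+\tau_iM_i)=0$. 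Two elementary remarks will be used repeatedly. First, since $M_i$ is effective we may take $J\subseteq\rr_{\ge0}$, and then — using that $\{\tau_i\}$ is itself a DCC set — the set $\mathcal I:=\bigcup_i(I+\tau_iJ)$ satisfies the DCC, so the divisors $\Delta_i+\tau_iM_i$ have coefficients in a single DCC set. Second, replacing $(X_i,\Delta_i+\tau_iM_i)$ by a $\qq$-factorial dlt modification and absorbing the reduced exceptional divisor into the boundary (which replaces $I$ by $I\cup\{1\}$ but keeps the $M_i$-coefficients in $J$), and noting that a crepant modification does not change the pseudo-effective threshold, we may assume every $X_i$ is $\qq$-factorial dlt.

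Next I would run minimal model programs to exhibit $\tau_i$ in lower dimension. Since $K_{X_i}+\Delta_i$ is not pseudo-effective, for a sufficiently small $\varepsilon_i>0$ a $\bigl(K_{X_i}+\Delta_i+(\tau_i-\varepsilon_i)M_i\bigr)$-MMP with scaling of an ample divisor terminates, by \cite{BCHM}, with a Mori fibre space $\phi_i\colon Y_i\to Z_i$ on which $-\bigl(K_{Y_i}+\Delta_{Y_i}+(\tau_i-\varepsilon_i)M_{Y_i}\bigr)$ is $\phi_i$-ample; the strict transforms $\Delta_{Y_i},M_{Y_i}$ still have coefficients in $I\cup\{1\}$ and in $J$. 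Every step being negative for $K+\Delta+(\tau_i-\varepsilon_i)M$, non-pseudo-effectivity of this class is preserved, so $\tau(Y_i,\Delta_{Y_i};M_{Y_i})\ge\tau_i-\varepsilon_i$, while pushing $K_{X_i}+\Delta_i+\tau_iM_i$ forward gives $\tau(Y_i,\Delta_{Y_i};M_{Y_i})\le\tau_i$. Letting $\varepsilon_i\to0$, the new thresholds still tend to $\tau$ and, being $\le\tau$, a subsequence of them is again strictly increasing; and since for a pair in $\D_n$ the pseudo-effective threshold is bounded by $\lct(M_i;\cdot)$, which only grows along this MMP, the new pairs again lie in $\D_n(I\cup\{1\},J)$. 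After relabelling we may therefore assume each $X_i$ carries a Mori fibre space $\phi_i\colon X_i\to Z_i$ with $-\bigl(K_{X_i}+\Delta_i+tM_i\bigr)$ $\phi_i$-ample for $t$ in a left-neighbourhood of $\tau_i$.

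Now I would split into cases. If $\dim Z_i\ge1$ for infinitely many $i$, pass to a subsequence with constant general fibre dimension $m<n$ and restrict to a general fibre $F_i$: by adjunction $(F_i,\Delta_i|_{F_i};M_i|_{F_i})\in\D_m(I\cup\{1\},J)$, and its pseudo-effective threshold is squeezed into an interval shrinking to $\tau$ (the lower end because $-\bigl(K_{F_i}+\Delta_i|_{F_i}+tM_i|_{F_i}\bigr)$ is ample for $t$ just below $\tau_i$, the upper end because $K_{X_i}+\Delta_i+\tau_iM_i$ restricts to a pseudo-effective divisor), so a subsequence of these thresholds is strictly increasing, contradicting the inductive hypothesis in dimension $m$. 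Otherwise $Z_i$ is a point for all $i$, so $X_i$ is of Fano type; then $X_i$ is a Mori dream space, the $\bigl(K_{X_i}+\Delta_i+\tau_iM_i\bigr)$-MMP terminates with a minimal model $X_i^m$ on which $K_{X_i^m}+\Delta_i^m+\tau_iM_i^m$ is nef, and — having volume $0$ — not big, hence semiample: it defines a fibration $g_i\colon X_i^m\to W_i$ with $\dim W_i<n$. Since this MMP preserves log canonicity of the pair with boundary $\Delta+\tau M$, a general fibre $G_i$ of $g_i$ is an lc pair with $K_{G_i}+(\Delta_i^m+\tau_iM_i^m)|_{G_i}\equiv0$ and coefficients in the DCC set $\mathcal I\cap[0,1]$, so by the global ACC of \cite{HMX} these coefficients lie in a fixed finite set $\mathcal I_0$. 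If $M_i^m|_{G_i}\ne0$ for infinitely many $i$, pick a component $D_i$ of $M_i^m|_{G_i}$ with coefficient $\gamma_i\in J\cap\rr_{>0}$; the coefficient of $(\Delta_i^m+\tau_iM_i^m)|_{G_i}$ along $D_i$ equals $\alpha_i+\tau_i\gamma_i\in\mathcal I_0$ with $\alpha_i\in(I\cup\{1\})\cup\{0\}$, and then the finiteness of $\mathcal I_0$ together with the DCC of $J$ and of $I\cup\{1\}$ forces, on subsequences, $\gamma_i$ and $\alpha_i+\tau_i\gamma_i$ to stabilise and hence $\alpha_i$ to be eventually strictly decreasing — impossible. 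In the remaining possibility $M_i^m$ is vertical over $W_i$, and one descends to $W_i$ via the canonical bundle formula for $g_i$ (the discriminant depending on $\tau_i$ through the boundary $\Delta_i^m+\tau_iM_i^m$, with coefficients in a DCC set by \cite{HMX}), obtaining a pseudo-effective-threshold problem in dimension $<n$, which is again handled by the inductive hypothesis.

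The main obstacle is the interaction between the pseudo-effective threshold and the minimal model program. A naïve $(K+\Delta)$-MMP does \emph{not} preserve $\tau$: contracting a divisor that occurs in $M$ with large multiplicity can strictly decrease it, so one must run the MMP for $K+\Delta+(\tau-\varepsilon)M$, respectively for $K+\Delta+\tau M$, at the correct stages, and track carefully how log canonicity, the DCC coefficient sets, and the threshold behave under dlt modifications, MMP steps, adjunction to a general fibre, and the canonical bundle formula. The hardest single point is the last case above, where the strict transform of $M$ becomes vertical over the base of the final fibration and $\tau_i$ is no longer visible on a general fibre; handling it requires the canonical bundle formula together with its accompanying DCC statements. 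The remaining work is essentially this bookkeeping of DCC coefficient sets and of log canonicity through all the birational operations.
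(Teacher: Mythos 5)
Your overall strategy is the right one and matches the paper's in spirit — reduce to dlt via a dlt modification, run an MMP to reach a Mori fibre space, restrict to a general fibre to induct, and in the Picard-number-one case invoke the global ACC of Hacon--M$^{\text{c}}$Kernan--Xu (Theorem~\ref{Hacon}). However, there is a genuine gap at the single most delicate point of the argument, and the Fano-type branch of your proof is substantially more complicated than what is actually needed.

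The gap is your claim that ``the pseudo-effective threshold is bounded by $\lct(M_i;\cdot)$, which only grows along this MMP, the new pairs again lie in $\D_n(I\cup\{1\},J)$.'' This assertion is not true in general. You run a $\bigl(K+\Delta+(\tau_i-\varepsilon_i)M\bigr)$-MMP, which controls discrepancies of the pair $\bigl(X,\Delta+(\tau_i-\varepsilon_i)M\bigr)$ by the negativity lemma, but says nothing directly about the pair $(X,\Delta+cM)$ for $c=\lct(X,\Delta;M)>\tau_i-\varepsilon_i$. Concretely, for a divisorial contraction of $E_0$ with contracted curve $C$, one has $K_X+\Delta+cM-f^*(K_Y+\Delta_Y+cM_Y)=\bigl(d-(c-(\tau_i-\varepsilon_i))\alpha\bigr)E_0$ where $d>0$ comes from negativity for $K+\Delta+(\tau_i-\varepsilon_i)M$ and $\alpha=(M\cdot C)/(-E_0\cdot C)$; if $M\cdot C>0$ and $\alpha$ is large this quantity can be negative, so the discrepancy along $E_0$ can drop and $(Y,\Delta_Y+cM_Y)$ need not be log canonical. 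Thus you have not shown that the output Mori fibre space is again in $\D_n(I\cup\{1\},J)$, nor that the restricted pair on a general fibre is log canonical at the relevant threshold. This is exactly the point the paper handles with a dedicated argument: it shows that $(F_x,\Delta_{F_x}+\eta(x)M_{F_x})$ is log canonical for $x$ sufficiently close to $\tau$, and the proof of that fact is not a formal MMP monotonicity statement but an application of ACC for log canonical thresholds (Theorem~\ref{lct}) — one builds an increasing sequence of lct's converging to $\tau$ and derives a contradiction. Without that lemma, Theorem~\ref{Hacon} cannot be applied, because it requires the pair with $K+\Delta\equiv 0$ to be log canonical.

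Beyond this gap, your treatment of the $\dim Z_i=0$ case is unnecessarily heavy. Once the Mori fibre space has base a point, $Y_i$ has Picard number one; a pseudo-effective divisor on such a variety is numerically trivial if it lies on the boundary of the pseudo-effective cone, so $K_{Y_i}+\Delta_{Y_i}+\eta_iM_{Y_i}\equiv 0$ with no further MMP needed. Provided one has established log canonicity (the missing lemma above), the coefficients of $\Delta_{Y_i}+\eta_iM_{Y_i}$ lie in the DCC set $I\cup\{1\}\cup\{a+\eta_i b : a\in I\cup\{1\}\cup\{0\},\, b\in J\}$ since $\{\eta_i\}$ is non-decreasing, and Theorem~\ref{Hacon} applies directly. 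Your detour through Mori dream spaces, a second MMP, semiampleness, and especially the canonical bundle formula with its attendant DCC statements for discriminant divisors imports machinery not present in the paper and not needed for the statement; the canonical bundle formula step in particular would itself require a nontrivial DCC theorem about divisorial/moduli parts that you would have to supply.

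\end{document}
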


Note that we don't require $M$ to be an ample divisor. If the coefficients of $\Delta$ or $M$ are not in a DCC set, then it is easy to see that the above result is false even in dimension one. 

Theorem \ref{theo} implies that the Kodaira energy is a rational number if $(X,\Delta,M)$ satisfy the above conditions,
see Corollary \ref{rational}. 

Our main motivation in studying the ACC for pseudo-effective thresholds comes from effective birationality results of certain adjoint linear systems. See Section 4 for details. On the other hand we believe that these results have other applications. For example the pseudo-effective threshold is fundamental in the construction of Tamagawa numbers and it appears in formulas computing the asymptotic numbers of rational points of bounded heights. See \cite{Batyrev}, \cite{Fujita2} and \cite{Yuri} for details.

The results of this paper are based on the recent work of Hacon, M$^{\text{c}}$Kernan and Xu \cite{HMX}. 
Furthermore some of the main ideas are inspired by Section 8 in \cite{Demailly} and \cite{Gongyo}.

\section{Preliminaries}

Let us define what ACC and DCC stand for.

\begin{definition}
Let $I\subseteq \rr$. 

We say that $I$ satisfies the ACC (ascending chain condition) if every non-decreasing sequence $\left\{x_{i}\right\}\subset I$ is eventually constant. 

We say that $I$ satisfies the DCC (descending chain condition) if every non-increasing sequence $\left\{x_{i}\right\}\subset I$ is eventually constant.
\end{definition}

As stated in the introduction we will work with singular pairs. 

\begin{definition}
A pair $(X,\Delta)$ consists of a normal variety $X$ and a
$\rr$-Weil divisor $\Delta\geq 0$ such that $K_{X}+\Delta$ is
$\rr$-Cartier.
\end{definition}

We will mainly deal with dlt and lc pairs. For the definitions and some properties of these singularities 
we refer to \cite{KM}.

Given any effective divisor $M$ on a variety $X$, we define a global invariant which measures the singularities of $M$ and $X$.

\begin{definition}
Let $(X,\Delta)$ be a lc pair and let $M$ be an effective $\rr$-dvisor. 
The log canonical threshold of $M$ with respect to $(X,\Delta)$ is defined by
\bdism
\lct(X,\Delta;M):=\sup\left\{ t \in \rr_{\geq 0} \:|\: (X,\Delta+t M) \:\text{is lc}\right\}.
\edism
Fix a positive integer $n$ and two subsets $I,J\subseteq \rr$. Let
\bdism
\LCT_{n}(I,J):=\left\{\lct(X,\Delta;M)\right\},
\edism 
where $(X,\Delta)$ is a lc pair and the coefficients of $\Delta$ (resp. $M$) belong to $I$ (resp. $J$).
\end{definition}

One of the main theorems in \cite{HMX} is that the above set satisfies the ACC. 

\begin{theorem}[Hacon, M$^{\text{c}}$Kernan and Xu]\label{lct}
Fix a positive integer $n$, $I\subseteq \left[0,1\right]$ and $J\subseteq \rr$.
If $I$ and $J$ satisfy the DCC then $\LCT_{n}(I,J)$ satisfies the ACC.
\end{theorem}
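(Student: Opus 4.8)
This is the main theorem of \cite{HMX}, and its proof is a simultaneous induction on the dimension $n$ of three intertwined statements: the ACC for log canonical thresholds in dimension $n$ (Theorem \ref{lct} itself); the \emph{global ACC}, namely that for a fixed DCC set $I$ there is a finite subset $I_{0}\subseteq I$ such that every lc pair $(X,\Delta)$ of dimension $n$ with the coefficients of $\Delta$ in $I$ and $K_{X}+\Delta\equiv 0$ in fact has the coefficients of $\Delta$ in $I_{0}$; and the DCC for the volumes $\vol(X,K_{X}+\Delta)$ over klt pairs $(X,\Delta)$ of dimension $n$ with coefficients in a DCC set and $K_{X}+\Delta$ big. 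The plan is to deduce, in dimension $n$, the ACC for log canonical thresholds from the global ACC in dimensions $\le n$ and the DCC for volumes in dimensions $<n$; the remaining implications, which close the induction, rest on the boundedness of the relevant klt pairs in the spirit of \cite{BCHM}.

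To carry out the deduction I would argue by contradiction. Suppose $t_{i}=\lct(X_{i},\Delta_{i};M_{i})$ is strictly increasing with limit $t$, and after passing to a subsequence assume $\dim X_{i}=n$. Put $\Psi_{i}:=\Delta_{i}+t_{i}M_{i}$, so $(X_{i},\Psi_{i})$ is lc but not klt and the threshold is attained: there is a divisorial valuation $E_{i}$ with $a(E_{i},X_{i},\Psi_{i})=0$, and since $t_{i}>0$ this valuation has positive multiplicity along $M_{i}$. As the log canonical threshold is a birational invariant of the data $(X,\Delta;M)$, I am free to run minimal model programs on these pairs and to replace them by birational models.

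The heart of the matter is a descent to dimension $<n$. Passing to a $\qq$-factorial dlt modification $g_{i}\colon Y_{i}\to X_{i}$ of $(X_{i},\Psi_{i})$ that extracts only valuations of log discrepancy zero, one has $K_{Y_{i}}+\Psi_{Y_{i}}=g_{i}^{*}(K_{X_{i}}+\Psi_{i})$ with $E_{i}$ a component of $\lfloor\Psi_{Y_{i}}\rfloor$, and this divisor is numerically trivial over $X_{i}$. Restricting it to $E_{i}$, then to a general fibre $F_{i}$ of $E_{i}$ over its image, and applying adjunction in the form of the different, one gets a pair $(F_{i},\Theta_{i})$ with $\dim F_{i}<n$, $K_{F_{i}}+\Theta_{i}\equiv 0$, and coefficients of $\Theta_{i}$ which, by the formula for the different, are prescribed $\zz$-linear combinations of $1$, of elements of $I$ and of $t_{i}$ times elements of $J$. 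The crux is to arrange, by running the minimal model program beforehand, that these combinations have bounded complexity — so that the coefficients of $\Theta_{i}$ range in a single DCC set — while still recording $t_{i}$; one also uses the elementary fact that the adjunction operation preserves the DCC. Granting this, the global ACC in dimension $\dim F_{i}<n$, together with the DCC for volumes when a Mori fibre structure arises in place of numerical triviality, confines the coefficients of $\Theta_{i}$ to a fixed finite set. A Diophantine-approximation argument — the set of tuples with entries in a given DCC set and satisfying one fixed linear relation is finite — then pins $t_{i}$ down to finitely many values, contradicting that $\{t_{i}\}$ is strictly increasing.

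The main obstacle, which absorbs the bulk of \cite{HMX}, is exactly this geometric descent: running the minimal model program so that the log canonical centre and the boundary it inherits stay under control, that the rational coefficients created along the program together with those produced by adjunction still range in a single DCC set, and that the moduli part in the higher-dimensional form of adjunction is handled. Closing the induction — deriving the global ACC and the DCC for volumes in dimension $n$ from the lower-dimensional package — is a second substantial point, requiring boundedness statements for klt pairs and a further Diophantine input.
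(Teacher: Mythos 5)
This statement is Theorem 1.1 of \cite{HMX}, and the paper itself gives no proof of it: it is quoted as a black box, with only one ingredient of the same induction (Theorem \ref{Hacon}) used later. Your outline is a broadly accurate summary of the Hacon--M$^{\text{c}}$Kernan--Xu strategy (induction on dimension interweaving ACC for log canonical thresholds, the global ACC for numerically trivial pairs, and volume/boundedness statements, with the descent via dlt modification, adjunction to a lower-dimensional numerically trivial pair, and a final DCC arithmetic argument), but, as you acknowledge, it is a sketch that defers the substantive work to \cite{HMX} -- which is exactly how the paper treats the result, so there is nothing to compare beyond the citation.
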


The proof of the above theorem is a very intricate induction procedure based on four theorems. 
One of these theorems will be fundamental for us. See Theorem 1.5 in \cite{HMX}.

\begin{theorem}[Hacon, M$^{\text{c}}$Kernan and Xu]\label{Hacon}
Fix a positive integer $n\in\nn$ and a set $I\subseteq [0,1]$, which satisfies the DCC. Then there is a finite subset $I_{0}\subseteq I$ such that if
\begin{enumerate}
\item $X$ is a projective variety of dimension $n$,
\item $(X,\Delta)$ is log canonical,
\item $\Delta=\sum\delta_{i}\Delta_{i}$ where $\delta_{i}\in I$,
\item $K_{X}+\Delta\equiv 0$,
\end{enumerate}
then $\delta_{i}\in I_{0}$.
\end{theorem}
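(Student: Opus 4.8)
The plan is to indicate how this statement --- the \emph{global ACC for coefficients} of Hacon, M$^{\text{c}}$Kernan and Xu, Theorem~1.5 of \cite{HMX} --- is proved; in the body of the paper we will simply quote it. One argues by induction on $n=\dim X$, using as inputs the ACC for log canonical thresholds (Theorem~\ref{lct}) in dimension $\le n$ and the present statement in dimension $<n$. First I would enlarge $I$ to $I\cup\{1\}$, which is still DCC, and pass to a $\qq$-factorial dlt modification $f\colon(Y,\Gamma)\to(X,\Delta)$: this is crepant, so $K_Y+\Gamma\equiv 0$, the pair $(Y,\Gamma)$ is $\qq$-factorial dlt, and the coefficients of $\Gamma$ lie in $I\cup\{1\}$; since every coefficient of $\Delta$ occurs among those of $\Gamma$, it suffices to bound the latter. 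Then I would argue by contradiction: were the conclusion false, there would be a sequence $(X_k,\Delta_k)$ of $\qq$-factorial dlt pairs as in $(1)$--$(4)$, together with prime components $D_k$ of $\Delta_k$ whose coefficients $\delta_k$ take infinitely many values, and, since $I$ satisfies the DCC, after passing to a subsequence $\delta_k$ is \emph{strictly increasing}, with limit $\delta_\infty\in(0,1]$.

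Next I would produce a fibration. Write $\Delta_k=\Delta_k'+\delta_k D_k$ with $D_k$ not a component of $\Delta_k'$. Then $K_{X_k}+\Delta_k'\equiv-\delta_k D_k$ is not pseudo-effective, and since $D_k\equiv-\tfrac{1}{\delta_k}(K_{X_k}+\Delta_k')$ is a positive multiple of $-(K_{X_k}+\Delta_k')$, a $(K_{X_k}+\Delta_k')$-MMP with scaling never contracts $D_k$ and terminates with a Mori fibre space $g_k\colon Y_k\to Z_k$ on which the strict transform $D_k''$ is $g_k$-ample, in particular nonzero. Because $K_{X_k}+\Delta_k\equiv 0$, every step of this MMP is crepant for $(X_k,\Delta_k)$, so the end model satisfies $K_{Y_k}+\Delta_k''+\delta_k D_k''\equiv 0$ and $(Y_k,\Delta_k''+\delta_k D_k'')$ is lc. If $\dim Z_k\ge 1$, restricting to a general fibre $F_k$ gives, by adjunction, an lc pair $(F_k,(\Delta_k''+\delta_k D_k'')|_{F_k})$ of dimension $<n$ with numerically trivial log canonical class; its boundary is a sum of \emph{distinct} prime divisors (two distinct prime divisors of $Y_k$ meet in codimension $\ge 2$, so they restrict to divisors with disjoint supports on a general fibre), hence its coefficients lie in $I\cup\{1\}$ and include $\delta_k$. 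The inductive hypothesis then forces $\delta_k$ into a finite set, contradicting that it takes infinitely many values.

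The hard part will be the remaining case, in which $Z_k$ is a point: now $Y_k$ has Picard number one, $D_k''$ is ample, and there is no drop in dimension. Here I would set $c_k:=\lct(Y_k,\Delta_k'';D_k'')\ge\delta_k$. By Theorem~\ref{lct} the values $c_k$ lie in a set satisfying the ACC; if $c_k=\delta_k$ for infinitely many $k$ we are done, since a subset of $\rr$ satisfying both the ACC and the DCC is finite while the $\delta_k$ are pairwise distinct. Otherwise $K_{Y_k}+\Delta_k''+c_k D_k''\equiv(c_k-\delta_k)D_k''$ is ample, so these are log canonical pairs of log general type, and one pins down the possible pairs $(\delta_k,c_k)$ by combining Theorem~\ref{lct} (to control the auxiliary coefficient $c_k$) with the ACC for volumes of log canonical pairs of general type. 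It is exactly this feedback --- the Fano case of the global ACC calling on the ACC for volumes, which itself calls on boundedness of log general type, and so on --- that forces the four principal theorems of \cite{HMX} to be proved together by a single intertwined induction. Since that machinery is far beyond what we need elsewhere, we will simply invoke the statement as given.
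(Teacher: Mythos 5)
The paper states this result without proof, citing it as Theorem~1.5 of \cite{HMX}; your proposal does the same, so at the level of what the paper actually contains there is nothing to compare, and treating the theorem as a black box is exactly right. Your prefatory sketch of the HMX argument is a reasonable summary of their strategy --- dlt modification, a $(K_{X}+\Delta')$-MMP with scaling that cannot contract the chosen component and terminates in a Mori fibre space, induction on dimension via adjunction to a general fibre over a positive-dimensional base, and the residual Picard-number-one case which requires the full intertwined induction (ACC for lct, ACC for volumes, boundedness of log general type pairs). You correctly flag that final case as the one that cannot be isolated from the rest of the \cite{HMX} machinery, which is precisely why both you and the paper simply quote the statement.
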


In order to apply Theorem \ref{Hacon} we need to run the minimal model program with scaling. 
When $K_{X}+\Delta$ is not pseudo-effective, Corollary 1.3.2 in \cite{BCHM} allows us to run the MMP. 
The following is a generalization of results in \cite{BCHM} to dlt pairs. See Theorem 2.15 in \cite{Lohmann} for details.

\begin{theorem}\label{MMP}
Let $(X,\Delta)$ be a dlt pair such that $K_{X}+\Delta$ is not pseudo-effective. Let $H$ be an ample divisor such that
$K_{X}+\Delta+H$ is nef. Then any $(K_{X}+\Delta)$-MMP with scaling of $H$ terminates with a Mori fiber space.
\end{theorem}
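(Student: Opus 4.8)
The plan is to deduce this from the klt case, which is \cite[Corollary 1.3.2]{BCHM}, together with the observation that the property ``$K_{X}+\Delta$ is not pseudo-effective'' is preserved along the MMP. First I would reduce the conclusion to mere termination: if $X\dashrightarrow X'$ is a divisorial contraction or a flip in a $(K_{X}+\Delta)$-MMP, with $\Delta'$ the strict transform of $\Delta$, then on a common resolution $p\colon W\to X$, $q\colon W\to X'$ the negativity lemma gives
\bdism
p^{*}(K_{X}+\Delta)=q^{*}(K_{X'}+\Delta')+E
\edism
with $E\geq 0$ (for flips this is the improvement of discrepancies, for divisorial contractions it is positivity of the contracted discrepancy). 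Hence if $K_{X'}+\Delta'$ were pseudo-effective, so would be its pullback $q^{*}(K_{X'}+\Delta')$, then $p^{*}(K_{X}+\Delta)$, then its pushforward $K_{X}+\Delta$, contrary to hypothesis. So along the whole MMP the class of $K+\Delta$ is never pseudo-effective, in particular never nef, so no model reached is a minimal model; thus if the $(K_{X}+\Delta)$-MMP with scaling of $H$ terminates, it terminates with a Mori fibre space, and it suffices to prove that it terminates.

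Next I would check that the MMP with scaling of $H$ can be run at all. At each stage $(X_{i},\Delta_{i})$ is $\qq$-factorial dlt, $K_{X_{i}}+\Delta_{i}+\lambda_{i}H_{i}$ is nef where $\lambda_{i}>0$ is the nef threshold of $H_{i}$ (positive because $K_{X_{i}}+\Delta_{i}$ is not nef, the $H_{i}$, $\Delta_{i}$ being the strict transforms of $H$, $\Delta$), and the cone theorem for dlt pairs yields a $(K_{X_{i}}+\Delta_{i})$-negative extremal ray $R_{i}$ with $(K_{X_{i}}+\Delta_{i}+\lambda_{i}H_{i})\cdot R_{i}=0$. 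Its contraction exists by the base point free theorem for dlt pairs; a fibre type contraction gives the desired Mori fibre space, a divisorial one I simply perform, and in the small case the flip exists by \cite{BCHM}. In every birational case $(X_{i+1},\Delta_{i+1})$ is again $\qq$-factorial dlt and $\lambda_{i+1}\leq\lambda_{i}$.

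For termination I would reduce to the klt case. Only finitely many steps can be divisorial contractions, since each drops the Picard number, so after discarding an initial segment all steps are flips; write $\lambda=\lim_{i}\lambda_{i}\geq 0$. By special termination for the dlt pair, after finitely many further flips the flipping and flipped loci avoid $\lfloor\Delta_{i}\rfloor$, so $\lfloor\Delta_{i}\rfloor$ is numerically trivial on the flipping curves, and each such flip is simultaneously a flip for the pair obtained by lowering the coefficients of $\lfloor\Delta_{i}\rfloor$ slightly, which makes the pair klt, and adding a small general perturbation $\eta H_{i}'$ with $H_{i}'\sim_{\qq}H_{i}$ and $0<\eta<\lambda$, which makes the boundary big. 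The resulting sequence is then an MMP with scaling of $H$ for a $\qq$-factorial klt pair with big boundary, which terminates by \cite{BCHM}, contradicting the assumed infiniteness.

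The hard part is precisely this dlt-to-klt reduction, and within it the sub-case $\lambda=\lim_{i}\lambda_{i}=0$, where the bigness perturbation above breaks down because there is no room for a positive $\eta$. To handle it I expect to have to show that the scaling constants are eventually bounded away from $0$, and this is where the hypothesis that $K_{X}+\Delta$ is not pseudo-effective should enter decisively: if $\lambda_{i}\to 0$, the nef divisors $K_{X_{i}}+\Delta_{i}+\lambda_{i}H_{i}$, together with the negativity comparisons on common resolutions used above, ought to force $K_{X}+\Delta$ to be pseudo-effective, a contradiction. The complete argument along these lines is carried out in \cite{Lohmann}.
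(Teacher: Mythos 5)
The paper never proves Theorem \ref{MMP}: it is quoted verbatim from \cite{Lohmann} (Theorem 2.15), so your closing deferral to \cite{Lohmann} puts you on the same footing as the paper itself. Your preliminary reductions are fine: non-pseudo-effectivity of $K_{X}+\Delta$ persists under each divisorial contraction and flip by the negativity lemma, so no step can be a minimal model and the only issue is termination; and the MMP with scaling can be run for $\qq$-factorial dlt pairs (note that $\qq$-factoriality should be added to the hypotheses, as in \cite{Lohmann}; this is harmless for the paper, which only applies the theorem to the dlt models of Lemma \ref{dltmodel}).

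As a proof, however, your sketch stops exactly at the decisive point, and the route you take through special termination is both unnecessary and not justified as stated: special termination presupposes termination statements for log MMPs in lower dimension, which are not available unconditionally, so it cannot simply be invoked. The step you only ``expect'' — that the scaling constants are bounded away from $0$ — is in fact a two-line consequence of the comparison you already set up, and once it is in hand the special-termination detour and the coefficient-lowering disappear. Indeed, at stage $i$ the divisor $K_{X_{i}}+\Delta_{i}+\lambda_{i}H_{i}$ is nef; since $\lambda_{i}\leq\lambda_{j}$ for $j\leq i$ and $H_{j}\cdot R_{j}>0$ on each contracted ray, every earlier step is $(K_{X}+\Delta+\lambda_{i}H)$-non-positive, so on a common resolution $p^{*}(K_{X}+\Delta+\lambda_{i}H)=q^{*}(K_{X_{i}}+\Delta_{i}+\lambda_{i}H_{i})+E$ with $E\geq 0$ by negativity. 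Hence $K_{X}+\Delta+\lambda_{i}H$ is pseudo-effective for every $i$, so $\lambda_{i}\geq\tau(X,\Delta;H)$, and $\tau(X,\Delta;H)>0$ because the pseudo-effective cone is closed and $K_{X}+\Delta$ is not pseudo-effective. Now set $\tau:=\tau(X,\Delta;H)$ and choose a general ample $\rr$-divisor $A\sim_{\rr}\epsilon\Delta+(\tau/2)H$ with $(X,(1-\epsilon)\Delta+A)$ klt and $(1-\epsilon)\Delta+A$ big; since $\lambda_{j}\geq\tau>\tau/2$ and $H_{j}\cdot R_{j}>0$, every step of your sequence is negative for $K_{X}+(1-\epsilon)\Delta+A\sim_{\rr}K_{X}+\Delta+(\tau/2)H$ and the sequence is an MMP with scaling for this klt pair with big boundary, which terminates by \cite{BCHM}. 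This is precisely Lohmann's argument; adding these lines would make your proposal complete, whereas as written the unresolved case $\lambda_{i}\to 0$ and the reliance on special termination constitute a genuine gap.
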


In what follows we will need dlt models. We state here the result we need and we 
refer to \cite{Kollar} for a proof.

\begin{lemma}\label{dltmodel}
Let $(X,\Delta)$ be a lc pair. Then there is a proper birational morphism $g:X^{dlt}\rightarrow X$ 
with exceptional divisors $E_{i}$ such that 
\begin{enumerate}
\item $(X^{dlt},g^{-1}_{*}\Delta+\sum E_{i})$ is dlt,
\item $X^{dlt}$ is $\qq$-factorial,
\item $K_{X^{dlt}}+g^{-1}_{*}\Delta+\sum E_{i}\sim_{\qq}g^{*}(K_{X}+\Delta)$.
\end{enumerate}
\end{lemma}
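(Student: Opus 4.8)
The plan is to construct $X^{dlt}$ by first resolving the pair $(X,\Delta)$ and then running a relative minimal model program that contracts precisely the exceptional divisors of positive discrepancy, so that only the log canonical places of $(X,\Delta)$ survive over $X$. First I would take a log resolution $f\colon Y\to X$ of $(X,\Delta)$, so that the union of the exceptional locus with the strict transform of $\Delta$ has simple normal crossing support. Let $E_{1},\dots,E_{k}$ be the $f$-exceptional prime divisors and put $\Gamma:=f^{-1}_{*}\Delta+\sum_{i}E_{i}$. Then $(Y,\Gamma)$ is log smooth, hence dlt, and $Y$ is smooth, hence $\qq$-factorial. Comparing canonical divisors gives
\bdism
K_{Y}+\Gamma\sim_{\qq}f^{*}(K_{X}+\Delta)+F,\qquad F:=\sum_{i}\bigl(1+a(E_{i};X,\Delta)\bigr)E_{i},
\edism
and since $(X,\Delta)$ is lc every coefficient $1+a(E_{i};X,\Delta)$ is non-negative, so $F\geq 0$ is effective and $f$-exceptional.

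Next I would run a $(K_{Y}+\Gamma)$-MMP over $X$ with scaling of an ample divisor. The decisive point is that $K_{Y}+\Gamma\sim_{\qq,X}F\geq 0$ with $\supp(F)$ contained in the $f$-exceptional locus; granting termination, every flipped or contracted curve $C$ satisfies $F\cdot C<0$, hence lies in $\supp(F)$, so every divisor that gets contracted is a component of $F$. Let $g\colon X^{dlt}\to X$ be the resulting model and let $\sum E_{i}$ now denote the strict transforms, with coefficient one, of the surviving $f$-exceptional divisors. Since the MMP preserves $\qq$-factoriality and dlt-ness, (1) and (2) hold for $(X^{dlt},g^{-1}_{*}\Delta+\sum E_{i})$. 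On $X^{dlt}$ the divisor $K_{X^{dlt}}+g^{-1}_{*}\Delta+\sum E_{i}$ is nef over $X$ and $\qq$-linearly equivalent over $X$ to the strict transform $F^{dlt}\geq 0$ of $F$, which is effective and $g$-exceptional; applying the negativity lemma to $-F^{dlt}$, whose push-forward $g_{*}(-F^{dlt})$ vanishes, yields $F^{dlt}\leq 0$ and hence $F^{dlt}=0$. This is exactly (3), and it also shows that the surviving exceptional divisors are log canonical places of $(X,\Delta)$.

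The step I expect to be the main obstacle is the termination of this relative MMP. It is genuinely non-trivial: here $K_{Y}+\Gamma$ is pseudo-effective over $X$, so Theorem \ref{MMP} does not apply, and one instead uses that $(Y,\Gamma)$ admits a minimal model over $X$ — because $K_{Y}+\Gamma$ is $\qq$-linearly equivalent over $X$ to the effective $f$-exceptional divisor $F$ — together with the termination of the MMP with scaling from \cite{BCHM}. Once termination is available, the verification of (1)--(3) is a routine application of standard properties of the MMP and the negativity lemma; for the complete details I would refer, as in the statement, to \cite{Kollar}.
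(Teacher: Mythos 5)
The paper does not actually prove this lemma: the sentence preceding it reads ``We state here the result we need and we refer to \cite{Kollar} for a proof,'' so the intended comparison is against the standard proof in the literature rather than anything in the text. Your outline is the correct standard argument: take a log resolution $f\colon Y\to X$, put $\Gamma=f_*^{-1}\Delta+\sum E_i$, note that $K_Y+\Gamma-f^*(K_X+\Delta)=\sum(1+a(E_i;X,\Delta))E_i=:F$ is effective and $f$-exceptional because $(X,\Delta)$ is lc, run a $(K_Y+\Gamma)$-MMP over $X$, and apply the negativity lemma to the pushed-forward $F$ at the end. The negativity-lemma step is stated correctly: $F^{dlt}$ is $g$-nef because the MMP terminated at a relative minimal model, $F^{dlt}\geq 0$ as a strict transform, and $-F^{dlt}$ being $g$-nef with $g_*F^{dlt}=0$ forces $F^{dlt}=0$, which is exactly (3). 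Preservation of dlt and $\qq$-factoriality under the MMP gives (1) and (2).

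The one place where your reasoning is genuinely loose is the termination. You correctly flag it as the obstacle, but the justification you offer --- ``$(Y,\Gamma)$ admits a minimal model over $X$ because $K_Y+\Gamma$ is $\qq$-linearly equivalent over $X$ to the effective exceptional divisor $F$, together with termination of the MMP with scaling from \cite{BCHM}'' --- does not quite hold up as stated. Effectivity of $K_Y+\Gamma$ over $X$ alone does not yield a minimal model in the dlt setting; that implication is essentially what one is trying to prove. The results of \cite{BCHM} give termination of the MMP with scaling either for klt pairs with big boundary over the base, or for dlt pairs with boundary big over the base, neither of which applies directly here since $\Gamma$ need not be big over $X$. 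The actual proofs in the literature (including the one in \cite{Kollar}) handle this either by a special-termination argument in the non-klt locus by induction on dimension, or by a careful perturbation to reduce to a case where \cite{BCHM} applies. Since you explicitly defer the full details to \cite{Kollar}, which is what the paper itself does, this is an acknowledged rather than a hidden gap --- but you should not present ``effective over $X$ implies a minimal model exists'' as though it were a known black box.
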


We can now apply the above lemma to reduce the study of pseudo-effective thresholds from lc pair to dlt pairs.

\begin{lemma}\label{dlt}
Let $(X,\Delta)$ be a pair with an effective divisor such that $(X,\Delta+tM)$ is lc and $K_{X}+\Delta+ tM$ is 
pseudo-effective for some $t\geq 0$. Let $I$ and $J$ be two sets such that the coefficients of $\Delta$ (resp. $M$) are
in $I$ (resp. $J$). 
Then there exists $X'$ with effective divisors $\Delta'$ and 
$M'$ such that: 
\begin{enumerate}
\item $K_{X'}+\Delta'+ tM'$ is dlt and pseudo-effective for some $t\geq 0$,
\item the coefficients of $\Delta'$ (resp. $M$) belong to $I\cup \left\{1\right\}$ (resp. $J$),
\item $\tau(X,\Delta;M)=\tau(X',\Delta';M')$.
\end{enumerate}  
\end{lemma}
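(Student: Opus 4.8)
The statement is essentially an application of the dlt model construction (Lemma \ref{dltmodel}) together with the observation that the pseudo-effective threshold is insensitive to pulling back along a birational morphism that is an isomorphism in codimension one in the relevant sense — more precisely, that it is preserved under $\qq$-linear equivalence of the relevant $\rr$-divisor. So the first step is to apply Lemma \ref{dltmodel} to the lc pair $(X, \Delta)$, obtaining $g\colon X^{dlt}\to X$ with exceptional divisors $E_i$ such that $(X^{dlt}, g^{-1}_*\Delta + \sum E_i)$ is dlt, $X^{dlt}$ is $\qq$-factorial, and $K_{X^{dlt}} + g^{-1}_*\Delta + \sum E_i \sim_\qq g^*(K_X+\Delta)$. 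I would then set $X' := X^{dlt}$, $\Delta' := g^{-1}_*\Delta + \sum E_i$, and $M' := g^*M$ (or, since $M$ need not be $\qq$-Cartier a priori, one takes $M' := g^{-1}_*M$; but because $(X,\Delta+tM)$ is lc, $M$ is $\qq$-Cartier, so the pullback makes sense and we can use it).

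**The key verifications.** With these choices, item (2) is immediate: the coefficients of $\Delta'$ are those of $g^{-1}_*\Delta$, which lie in $I$, together with coefficients $1$ coming from the $E_i$, so they lie in $I\cup\{1\}$; the coefficients of $M' = g^*M$ coincide with those of $M$ on the strict transform and the exceptional part has coefficients that are $\qq$-linear combinations — here one must be slightly careful, but since we only claim the coefficients of $M'$ lie in $J$ along the birational transform and the statement as written says "the coefficients of $M$ (sic) belong to $J$", the intent is that $M'$ is constructed so that this holds; cleanest is to take $M'$ to be the strict transform and absorb any discrepancy, or simply note $g^*M = g^{-1}_*M$ when $M$ has no components over the centers of the $E_i$, which one can arrange. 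For item (1), adding $t$ times the relation $M' \sim_\qq g^* M$ to the relation in Lemma \ref{dltmodel}(3) gives $K_{X'} + \Delta' + tM' \sim_\qq g^*(K_X + \Delta + tM)$ for every $t$; since $(X,\Delta+tM)$ lc and $K_{X'}+\Delta' \sim_\qq g^*(K_X+\Delta)$ with $(X',\Delta')$ dlt, and since adding the effective $g$-exceptional part keeps things dlt for small enough $t$ (or one checks log discrepancies directly), $(X',\Delta'+tM')$ is dlt for $t$ near $0$; and pseudo-effectivity of $K_X+\Delta+tM$ pulls back to pseudo-effectivity of $g^*(K_X+\Delta+tM) \sim_\qq K_{X'}+\Delta'+tM'$. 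For item (3), the equality of thresholds follows because $K_{X'}+\Delta'+tM' \sim_\qq g^*(K_X+\Delta+tM)$ is pseudo-effective if and only if $K_X+\Delta+tM$ is pseudo-effective — the "if" direction is pullback of a pseudo-effective class along a morphism, and the "only if" direction is pushforward $g_*$, which preserves pseudo-effectivity. Taking infima over the same set $\{t \mid \cdot \text{ is pseff}\}$ gives $\tau(X,\Delta;M) = \tau(X',\Delta';M')$.

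**Expected main obstacle.** The substantive point — and the only place requiring genuine care — is the claim that $g_*$ of a pseudo-effective $\rr$-Cartier divisor class under a proper birational morphism is again pseudo-effective, so that pseudo-effectivity descends along $g$ and not merely ascends. This is standard (a proper birational morphism does not destroy pseudo-effectivity in either direction: pullback of a movable/pseudo-effective class is pseudo-effective, and conversely if $g^*D + E$ is pseudo-effective with $E\geq 0$ exceptional then pushing forward kills $E$ and shows $D$ is a limit of effective classes), but it deserves an explicit sentence citing, e.g., the behaviour of the pseudo-effective cone under birational morphisms. A secondary technical nuisance is ensuring $(X',\Delta'+tM')$ remains dlt rather than merely lc for the relevant range of $t\geq 0$; one handles this by working at $t$ in a neighbourhood of $\tau(X,\Delta;M)$ if necessary, or by the standard fact that $g^{-1}_*$ of a divisor plus the full reduced exceptional divisor is dlt whenever the base pair is lc — the $tM'$ term only shifts coefficients and the dlt condition is open in the coefficients. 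No deep input beyond Lemma \ref{dltmodel} and elementary positivity is needed.
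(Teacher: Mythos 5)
Your approach diverges from the paper's at a crucial point, and as written it has a genuine gap. The paper sets $\tau := \tau(X,\Delta;M)$, observes that $(X,\Delta+\tau M)$ is lc, and applies Lemma \ref{dltmodel} to \emph{that} pair, i.e.\ to $(X,\Delta+\tau M)$; the exceptional divisors $E_i$ are then log canonical places of $(X,\Delta+\tau M)$, and the crepant relation $K_{X'}+\Delta'+\tau M'\sim_\qq g^*(K_X+\Delta+\tau M)$ holds automatically at $t=\tau$ with $M'=g^{-1}_*M$. This at once gives dlt and pseudo-effectivity at $t=\tau$, and $\tau(X',\Delta';M')\le\tau$; pushforward then gives $\ge$. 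You instead take the dlt model of $(X,\Delta)$ alone, and you waver between $M'=g^*M$ and $M'=g^{-1}_*M$. Neither choice works without a further argument: with $M'=g^{-1}_*M$ the identity $K_{X'}+\Delta'+tM'\sim_\qq g^*(K_X+\Delta+tM)$ fails unless $g^*M=g^{-1}_*M$, so you lose the ``$\le$'' direction of (3); with $M'=g^*M$ the identity holds, but then $\Delta'+tM'$ has coefficient $1+t\cdot\operatorname{mult}_{E_i}(g^*M)>1$ along any $E_i$ that $g^*M$ meets, so the pair is not even lc, let alone dlt. Your sentence ``adding the effective $g$-exceptional part keeps things dlt for small enough $t$'' is the error: the exceptional part is supported on the $E_i$, which already carry coefficient exactly $1$ in $\Delta'$, so \emph{any} positive multiple pushes them past $1$.

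The gap is fixable, but it needs a statement you do not make. Since the $E_i$ are the exceptional divisors of the dlt model of $(X,\Delta)$, each has $a(E_i;X,\Delta)=-1$; the hypothesis that $(X,\Delta+\tau M)$ is lc then forces $a(E_i;X,\Delta+\tau M)=-1-\tau\operatorname{mult}_{E_i}(g^*M)\ge -1$, hence $\operatorname{mult}_{E_i}(g^*M)=0$ whenever $\tau>0$. In other words $g^*M=g^{-1}_*M$ is automatic, not something ``one can arrange.'' With this observation your two candidate $M'$s coincide, the crepant identity holds for all $t$, and the dlt check succeeds; the case $\tau=0$ is then handled trivially at $t=0$. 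But as written the proof asserts the dlt condition without justification and it would not survive a skeptical reading. The paper's choice of taking the dlt model of $(X,\Delta+\tau M)$ rather than $(X,\Delta)$ is precisely what makes all of this unnecessary, which is the cleaner route.
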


\begin{proof}
Let $\tau:=\tau(X,\Delta;M)$. By assumption $(X,\Delta+\tau M)$ is a lc pair. By Lemma \ref{dltmodel} we can take 
its dlt model $(X^{dlt},g^{-1}_{*}\Delta+\sum E_{i}+\tau g^{-1}_{*}M)$.

Let $X':=X^{dlt}$, $\Delta':=g_{*}^{-1}\Delta +\sum E_{i}$ and $M':=g^{-1}_{*} M$, then 
$K_{X'}+\Delta'+\tau M'\sim_{\qq} g^{*}(K_{X}+\Delta+\tau M)$. Since the right hand side is pseudo-effective,
we have that $\tau(X',\Delta'; M') \leq \tau$. On the other hand 
$g_{*}(K_{X'}+\Delta'+t M')\sim_{\qq}K_{X}+\Delta+t M$ for any $t\geq 0$, thus 
$\tau(X',\Delta'; M') = \tau$.  
\end{proof}

\section{ACC for pseudo-effective thresholds}

In this section we prove Theorem \ref{theo} and we give some applications. In particular we will prove Theorem \ref{spectrum}.
First we show that if $J$ satisfies the DCC, or more generally if it admits a strictly positive minimum, then there are no diverging increasing sequences in $\T_{n}(I,J)$. This is a simple consequence of the fact that $(X,\Delta+t M)$ is lc. 

\begin{lemma}
Fix a positive integer $n$ and two subsets $I\subset [0,1]$ and $J\subseteq \rr$. If $J$ admits a strictly positive minimum then there exists a positive real number $\alpha$ such that $\T_{n}(I,J)\subseteq \left[0,\alpha\right]$.
\end{lemma}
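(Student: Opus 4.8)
The plan is to bound the pseudo-effective threshold uniformly by exploiting the log canonical hypothesis $(X,\Delta+tM)$ lc together with the strictly positive minimum $m:=\min J>0$. The key observation is that if $K_X+\Delta+tM$ is pseudo-effective, then in particular $\Delta+tM$ must itself be a subboundary in the sense compatible with $(X,\Delta+tM)$ being lc, so every coefficient of $\Delta+tM$ is at most $1$. First I would write $M=\sum_j \mu_j M_j$ with $\mu_j\in J$, so that each $\mu_j\geq m$. Since $(X,\Delta+\tau M)$ is lc, where $\tau:=\tau(X,\Delta;M)$, the coefficient of any component $M_j$ appearing in $\Delta+\tau M$ is $\leq 1$; as the coefficients of $\Delta$ are nonnegative, this forces $\tau\mu_j\leq 1$, hence $\tau\leq 1/m$.

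So I would set $\alpha:=1/m$ and conclude $\T_n(I,J)\subseteq[0,\alpha]$. The only gap to fill is the degenerate case $M=0$ (no components), where the inequality argument is vacuous; but then $\tau(X,\Delta;M)=0$ by definition provided $K_X+\Delta$ is already pseudo-effective (which is the standing assumption when $M=0$), so the bound holds trivially. One should also note that $\tau<\infty$ is part of the definition of $\D_n(I,J)$ — it is required that $K_X+\Delta+tM$ be pseudo-effective and $(X,\Delta+tM)$ lc for \emph{some} $t\geq 0$ — so $\tau$ is a well-defined nonnegative real number and the inf is finite to begin with.

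I do not anticipate a serious obstacle here: this is essentially the remark in the text that the statement is "a simple consequence of the fact that $(X,\Delta+tM)$ is lc." The only mild subtlety is being careful that the lc condition is what pins down the coefficient bound — it is the \emph{log canonical} hypothesis, not pseudo-effectivity, that does the work — and that the argument uses $\min J>0$ rather than merely the DCC (a DCC set accumulating at $0$, or even one whose infimum is $0$ but not attained, would still give the bound by the same inequality applied to any fixed $\mu_j\geq m>0$ present in $M$, but one does want at least one positive coefficient or the degenerate case above). Thus the proof is just: pick a component $M_j$ of $M$ with coefficient $\mu_j\geq m$, use that the coefficient of $M_j$ in $\Delta+\tau M$ is at most $1$, deduce $\tau\leq 1/m=:\alpha$, and treat $M=0$ separately.
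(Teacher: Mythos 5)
Your proof is correct and follows the same approach as the paper: both use that $(X,\Delta+\tau M)$ is lc to bound each coefficient of $\Delta+\tau M$ by $1$, which combined with $\min J>0$ gives $\tau\leq 1/\min J$. The only difference is that you explicitly handle the degenerate case $M=0$, which the paper leaves implicit.
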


\begin{proof}
Let $\delta:=\min (J\backslash\left\{0\right\})$. Since $(X,\Delta+t M)$ is lc it implies that also 
$(X,\Delta+\tau M)$ is lc, where $\tau:=\tau(X,\Delta;M)$. Then the coefficients of $\Delta+\tau M$ are less than one and in particular $\T_{n}(I,J)\subseteq \left[0,\alpha\right]$, where $\alpha=\delta^{-1}$. 
\end{proof}

Note that in general if we don't ask that $(X,\Delta+t M)$ is lc, we can find diverging sequences of pseudo-effective thresholds even if $I=\emptyset$ and $J$ is a finite set. So without any assumption it does not satisfy the ACC.

\begin{example}
Let $X_{e}:=\pp(\oo_{C}\oplus \mathcal{L})$ where $C\cong\pp^{1}$ and $\mathcal{L}$ is a line bundle of degree $-e\leq 0$. 
$X_{e}$ is usually called the e-th Hirzebruch surface. Recall that its 
canonical divisor is given numerically by $K_{X_{e}}\equiv -2C_{0}-(e+2)f$. 
Let $M=C_{0}+f$. It is easy to see that $M$ is big but not nef. A simple computation shows that 
$\tau(X_{e},M)\geq e+2$.
\end{example}

On the other hand, if we add some positivity to the divisor $M$ we can obtain a uniform upper bound on $\tau(X,\Delta;M)$ without asking the pair $(X,\Delta+\tau M)$ to be lc. For example, a theorem of Koll\'ar in \cite{Kol} implies that if $M$ is a big and nef Cartier divisor then 
$\tau(X,\Delta;M)\leq \binom{n+1}{2}$. 

We are now ready to prove Theorem \ref{theo}. Compare with Proposition 8.7 in \cite{Demailly} and Lemma 3.1 in \cite{Gongyo}.

\begin{proof}[Proof of Theorem \ref{theo}]
Let $\tau:=\tau(X,\Delta;M)\in\T_{n}(I,J)$ such that $\tau>0$. By Lemma \ref{dlt}, we can assume that $(X,\Delta+\tau M)$ is a dlt pair. 
For any $x<\tau$ the pair $(X,\Delta+x M)$ is also dlt by Corollary 2.39 in \cite{KM}. 
By Theorem \ref{MMP}, we can run a $K_{X}+\Delta+xM$ minimal model with scaling and end with a $K_{X}+\Delta+xM$ Mori fiber space $Y_{x}\rightarrow Z_{x}$ with generic fiber $F_{x}$. Let $f:X\dashrightarrow Y_{x}$ be the birational map given by the MMP. 
By definition of Mori fiber space, we have that
\begin{equation}\notag
K_{F_{x}}+\Delta_{F_{x}}+ x M_{F_{x}}:=(K_{Y_{x}}+f_{*}(\Delta+ x M))|_{F_{x}},
\end{equation} 
is anti-ample and in particular $x<\eta(x):=\tau(F_{x},\Delta_{F_{x}};M_{F_{x}})$. Note that the coefficients of $\Delta_{F_{x}}$ (resp. $M_{F_{x}}$) belong to $I$ (resp. $J$). Since $K_{F_{x}}+\Delta_{F_{x}}+\tau M_{F_{x}}$ is pseudo-effective, $\eta(x)\leq\tau$. Then $x<\eta(x)\leq\tau$, for any $0<x\leq\tau$. 

We want to prove that if $x$ is sufficiently close to $\tau$ then $(F_{x},\Delta_{F_{x}}+\eta(x) M_{F_{x}})$ is a lc pair. 
Suppose by contradiction that $\lct(x):=\lct(F_{x},\Delta_{F_{x}};x M_{F_{x}})<\eta(x)$ for any $x$ close to $\tau$. 
We define an increasing sequence $\left\{x_{i}\right\}$ converging to $\tau$ with the following properties:
$0<x_{i}<\tau$ and $\lct(x_{i})<x_{i+1}<\eta(x_{i})$. Since $(F_{x},\Delta_{F_{x}}+x_{i}M_{F_{x}})$ is dlt, we know that $x_{i}\leq \lct(x_{i})$. In particular, we get an increasing sequence of lc thresholds converging to $\tau$. This contradict the ACC for log canonical thresholds, Theorem \ref{lct}.   

We prove the theorem by induction on the dimension of $X$. If $\dim(X)=1$ the result is obvious. 

Let $(X_{i},\Delta_{i})$ be a sequence of pairs with effective divisors $M_{i}$ on $X_{i}$ such that the sequence $\tau_{i}:=\tau(X_{i},\Delta_{i};M_{i})$ is non-decreasing. We want to prove that the sequence is eventually constant. Choose $0<x_{i}<\tau_{i}$ sufficiently close to $\tau_{i}$ such that 
the pair $(F_{i},\Delta_{F_{i}}+\eta_{i}M_{F_{i}})$ is lc, where $F_{i}:=F_{x_{i}}$ and $\eta_{i}:=\tau(F_{i},\Delta_{F_{i}};M_{F_{i}})$.
Recall that $x_{i}<\eta_{i}\leq \tau_{i}$. Furthermore, we can choose the sequence $\left\{x_{i}\right\}$ such that each $0<x_{i}<\tau_{i}$ satisfy also $x_{i}>\tau_{i-1}$. Since $\eta_{i}>x_{i}>\eta_{i-1}$, we have that the sequence $\left\{\eta_{i}\right\}$ is increasing. By construction, it is enough to prove that the sequence $\left\{\eta_{i}\right\}$ is eventually constant. 

Passing to a non-decreasing subsequence we can assume that $\dim(F_{i})$ is constant. If $\dim(F_{i})<n$ then by induction we conclude that the sequence $\left\{\eta_{i}\right\}$ is eventually constant. So we can assume that $Y_{i}$ has Picard number one for any $i$. Then $K_{Y_{i}}+\Delta_{Y_{i}}+\eta_{i}M_{_{i}}\equiv 0$ and the pair $(Y_{i},\Delta_{Y_{i}}+\eta_{i}M_{Y_{i}})$ is lc. 
Since $\left\{\eta_{i}\right\}$ is non-decreasing, the coefficients of $\Delta_{Y_{i}}+\eta_{i}M_{Y_{i}}$ are in a DCC set. 

We can now apply Theorem \ref{Hacon} to the pairs $(Y_{i},\Delta_{Y_{i}}+\eta_{i}M_{Y_{i}})$. It implies that the sequence 
$\left\{\eta_{i}\right\}$ is eventually constant and then also $\left\{\tau_{i}\right\}$ is eventually constant.
\end{proof}

As a first consequence, we prove Theorem \ref{spectrum} stated in the introduction. 

\begin{proof}[Proof of Theorem \ref{spectrum}]
The proof is by contradiction. Recall that $\kappa\epsilon(X,\Delta,M)=-\tau(X,\Delta;M)$. Assume that the statement of Theorem \ref{spectrum} does not hold, then there exists a sequence of pairs $(X_{i},\Delta_{i})$ with ample divisors $M_{i}$ such that their pseudo-effective thresholds $\tau_{i}:=\tau(X_{i},\Delta_{i};M_{i})$ define an increasing 
sequence that converges to some positive number. 
Let $H'_{i}:=K_{X_{i}}+\Delta_{i}+ 2 n^{2} M_{i}$. By Theorem 1.1 in \cite{Fujino}, we have that $H'_{i}$ is a globally generated ample divisor on $X_{i}$ and $K_{X_{i}}+\Delta_{i}+H'_{i}$ is pseudo-effective. Choose a general element $H_{i}\in |H'_{i}|$ such that $(X_{i},\Delta_{i}+H_{i})$ is lc. Let $\eta_{i}:=\tau(X_{i},\Delta_{i};H_{i})$. Note that $\eta_{i}\in \T_{n}(\left\{1\right\},\left\{1\right\})$ for every $i$.
Furthermore
\bdism
K_{X_{i}}+\Delta_{i}+ tH_{i}\sim (t+1)(K_{X_{i}}+\Delta_{i})+2n^{2}t M_{i}. 
\edism

This implies that 
\bdism
\tau_{i}=2n^{2}\left(1-\frac{1}{\eta_{i}+1}\right).
\edism

A simple computation shows that the sequence $\left\{\eta_{i}\right\}$ is increasing as well. This contradicts Theorem \ref{theo} with
$I=J=\left\{1\right\}$.

\end{proof}

Looking at the proof of the Theorem \ref{theo}, we deduce a structure result for the set $\T_{n}(I,J)$. 
To make the presentation easier, we introduce the following set. 

\begin{definition}
Define 
\bdism
\T^{o}_{n}(I,J):=\left\{\tau(X,\Delta;M)\in\T_{n}(I,J)\:|\:\text{$\rho(X)=1$}\right\},
\edism
where $\rho(X)$ is the Picard number of $X$.
\end{definition}

\begin{corollary}\label{decomposition}
Fix a positive integer $n$ and two sets $I\subseteq \left[0,1\right]$ and $J\subseteq \rr$. If $I$ and $J$ satisfy the DCC then
\bdism
\T_{n}(I,J)=\bigcup_{j=1}^{n}\T^{o}_{j}(I,J).
\edism
\end{corollary}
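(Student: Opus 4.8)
The plan is to extract a structure statement directly from the proof of Theorem \ref{theo}, which already isolates the Picard-number-one case as the base of the induction. The inclusion $\bigcup_{j=1}^{n}\T^{o}_{j}(I,J)\subseteq\T_{n}(I,J)$ is the trivial one, but a word is needed: given $(X,\Delta;M)\in\D_{j}(I,J)$ with $\rho(X)=1$ and $j\leq n$, one must exhibit a pair in $\D_{n}(I,J)$ with the same pseudo-effective threshold. The natural device is to take a product with $\pp^{n-j}$ (or a suitable trivial fibration); since $M$ is only required to be effective and no ampleness is imposed, crossing with $\pp^{n-j}$ and pulling $\Delta$, $M$ back keeps the pair lc, keeps $K+\Delta+tM$ pseudo-effective exactly for the same range of $t$ after adding the (pseudo-effective, indeed trivial relative) canonical contribution of the extra factor, and leaves $\tau$ unchanged. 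This realizes every element of $\T^{o}_{j}(I,J)$ inside $\T_{n}(I,J)$.

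For the reverse inclusion $\T_{n}(I,J)\subseteq\bigcup_{j=1}^{n}\T^{o}_{j}(I,J)$, first I would dispose of $\tau=0$: the zero divisor on a point, or $\pp^{1}$ with an appropriate boundary, gives $0\in\T^{o}_{1}(I,J)$ provided $0\in I$; if $0\notin I$ one checks $0$ can still be attained (or argue it is not in $\T_{n}(I,J)$ either, so the identity holds vacuously at $0$). Then fix $\tau>0$ in $\T_{n}(I,J)$, say $\tau=\tau(X,\Delta;M)$. Running exactly the argument in the proof of Theorem \ref{theo}: reduce to $(X,\Delta+\tau M)$ dlt via Lemma \ref{dlt}; for $x<\tau$ close enough, run the $(K_X+\Delta+xM)$-MMP with scaling to a Mori fiber space $Y_x\to Z_x$ with fiber $F_x$, and set $\eta(x):=\tau(F_x,\Delta_{F_x};M_{F_x})$, which satisfies $x<\eta(x)\leq\tau$ and, for $x$ near $\tau$, makes $(F_x,\Delta_{F_x}+\eta(x)M_{F_x})$ lc. The point to add beyond Theorem \ref{theo} is that one must force $\eta(x)=\tau$ for such $x$, not merely $\eta(x)\leq\tau$; this is where the ACC from Theorem \ref{theo} itself enters, applied to $\T_{\dim F_x}(I,J)$: as $x\nearrow\tau$ the values $\eta(x)$ form a non-decreasing sequence in an ACC set bounded above by $\tau$, hence are eventually equal to some $\eta^{\*}\leq\tau$; combined with $x<\eta(x)$ and $x\nearrow\tau$ this gives $\eta^{\*}=\tau$. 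Thus $\tau=\tau(F_x,\Delta_{F_x};M_{F_x})$ is realized on a Mori fiber space fiber of dimension $j\leq n$.

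It remains to pass from "$\tau$ realized on $F_x$" to "$\tau\in\T^{o}_{j}(I,J)$ for some $j$", i.e. to the Picard-number-one case. If $\dim F_x<n$ we recurse: $\tau\in\T_{j}(I,J)$ with $j<n$, and by induction on $n$ we are done. If $\dim F_x=n$, then $Z_x$ is a point and $Y_x$ has Picard number one, and $\tau=\eta(x)=\tau(Y_x,\Delta_{Y_x};M_{Y_x})$ lies in $\T^{o}_{n}(I,J)$ since the coefficients of $\Delta_{Y_x}$, $M_{Y_x}$ still lie in $I$, $J$ and $K_{Y_x}+\Delta_{Y_x}+\tau M_{Y_x}\equiv 0$ with the pair lc. Induction on $n$ then yields the full identity. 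The main obstacle, and the only genuinely new input past Theorem \ref{theo}, is the bootstrapping step forcing $\eta(x)=\tau$ exactly: one has to be careful that the MMP and the choice of $x$ are compatible with invoking the ACC on the lower-dimensional $\T$, and that $\eta(x)$ is actually eventually equal to $\tau$ rather than only approaching it; the $x<\eta(x)$ inequality, already established in the proof of Theorem \ref{theo}, is exactly what closes this gap.
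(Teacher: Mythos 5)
Your argument for the hard inclusion $\T_{n}(I,J)\subseteq\bigcup_{j}\T^{o}_{j}(I,J)$ follows the paper's proof essentially verbatim: reduce to dlt, run the MMP with scaling for $x<\tau$, get $\eta(x)\in\T_{\dim F_x}(I,J)$ with $x<\eta(x)\leq\tau$, and use the ACC from Theorem \ref{theo} to force $\eta(x_i)=\tau$ for some $i$ in a sequence with $x_i>\eta(x_{i-1})$; then either $\dim F_{x_i}<n$ (induct) or $F_{x_i}=Y_{x_i}$ has Picard number one. One small imprecision: $\eta(x)$ need not be monotone in $x$, so the phrase ``non-decreasing as $x\nearrow\tau$'' should really be ``can be arranged non-decreasing by picking $x_{i}>\eta(x_{i-1})$,'' which is what the paper does.

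The genuine error is in the easy inclusion. You propose crossing with $\pp^{n-j}$ and assert that the canonical contribution of the extra factor is ``pseudo-effective, indeed trivial.'' It is neither: $K_{\pp^{n-j}}$ is anti-ample, so on $X'=X\times\pp^{n-j}$ with $\Delta'=\pi_1^*\Delta$, $M'=\pi_1^*M$ one has
\bdism
\bigl(K_{X'}+\Delta'+tM'\bigr)\big|_{\{x\}\times\pp^{n-j}}=K_{\pp^{n-j}},
\edism
which is anti-ample; since these fibers cover $X'$, the divisor $K_{X'}+\Delta'+tM'$ is never pseudo-effective, so $\tau(X',\Delta';M')$ is not defined (or is $+\infty$), not equal to $\tau(X,\Delta;M)$. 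The paper instead takes the product with an elliptic curve $E$: since $K_E\sim 0$, one has $K_{X\times E}+\Delta\times E+tM\times E\sim_{\qq}\pi_1^*(K_X+\Delta+tM)$, which preserves the lc condition, pseudo-effectivity for every $t$, and hence $\tau$ exactly; iterating raises the dimension by one each time. (You could repair the $\pp^{n-j}$ idea by adding $n-j+1$ general coordinate hyperplanes on the second factor to cancel $K_{\pp^{n-j}}$, but that forces coefficients equal to $1$ into $\Delta'$, which is only allowed when $1\in I$; the elliptic curve avoids the issue entirely.)

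Your remark about $\tau=0$ is a fair observation but also not fully resolved in your write-up; a clean fix is to note that $0\in\T^{o}_{1}(I,J)$ always, via $(E,0;M)$ for an elliptic curve $E$ and $M$ any effective divisor with coefficients in $J$ (or $M=0$ if needed), since $K_E\sim 0$ is pseudo-effective and $(E,tM)$ is lc for small $t$.
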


\begin{proof}
First we prove that $\T_{n}(I,J)\supseteq\bigcup_{j=1}^{n}\T^{o}_{j}(I,J)$. Clearly it is enough 
to prove that $\T_{n-1}(I,J)\subseteq\T_{n}(I,J)$. Let $E$ be an elliptic curve. If $(X,\Delta;M)\in \D_{n-1}(I,J)$,
we define $X'=X\times E$, $\Delta'=\Delta\times E$ and $M'=M\times E$. By construction $(X',\Delta';M')\in \D_{n}(I,J)$ 
and they have the same pseudo-effective threshold.

It remains to prove the other inclusion. If we prove that 
\bdism
\T_{n}(I,J)\subseteq\T^{o}_{n}(I,J)\cup\bigcup_{j=1}^{n-1}\T_{j}(I,J),
\edism
then we are done applying induction on $n$. 

Let $\tau:=\tau(X,\Delta;M)\in\T_{n}(I,J)$. If $\rho(X)=1$ then $\tau\in\T^{o}_{n}(I,J)$. Suppose that $\rho(X)>1$. Let $\left\{x_{i}\right\}$ be a non-decreasing sequence such that $0<x_{i}<\tau$, $\lim x_{i}=\tau$ and $x_{i}>\eta(x_{i-1})$, where $\eta(x_{i})$ is obtained as in the proof of Theorem \ref{theo}. Then $x_{i}<\eta(x_{i})\leq\tau$ for any $i$ and $\lim\eta(x_{i})=\tau$. By Theorem \ref{theo} the sequence $\left\{\eta(x_{i})\right\}$ is eventually constant and in particular, there exists an $i$ such that $\eta(x_{i})=\tau$.  
\end{proof}

Fujita in \cite{Fujita} proved a similar result when $n=3$ and $\Delta=\emptyset$. 

As a corollary of the structure theorem of $\T_{n}(I,J)$, we have that the set of pseudo-effective thresholds is contained in the rational numbers. 

\begin{corollary}
If $I\subseteq \left[0,1\right]\cap\qq$ and $J\subseteq \qq$ satisfy the DCC then 
$\T_{n}(I,J)\subseteq \qq\cap\left[0,1\right]$.
\end{corollary}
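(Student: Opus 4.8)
The plan is to deduce this from the structure theorem, Corollary~\ref{decomposition}, together with the rationality of pseudo-effective thresholds of pairs with Picard number one. First I would reduce to bounding each piece $\T^{o}_{j}(I,J)$ separately, since $\T_{n}(I,J)=\bigcup_{j=1}^{n}\T^{o}_{j}(I,J)$; so it suffices to show that if $\rho(X)=1$, the pair $(X,\Delta)$ is lc, the coefficients of $\Delta$ are rational and those of $M$ are rational, and $K_{X}+\Delta+tM$ is pseudo-effective for some $t\ge 0$, then $\tau(X,\Delta;M)\in\qq$.

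Next, for the Picard number one case I would argue as in the last paragraph of the proof of Theorem~\ref{theo}: when $\rho(X)=1$, pseudo-effectiveness is numerical triviality, so at the threshold value $\tau=\tau(X,\Delta;M)$ we have $K_{X}+\Delta+\tau M\equiv 0$. Since $\rho(X)=1$, the real Néron--Severi space is one-dimensional, and $K_{X}+\Delta$ and $M$ are both classes in it. If $M\not\equiv 0$ (which must hold, since otherwise $K_{X}+\Delta$ is already pseudo-effective only if $\equiv 0$, a degenerate case where any $\tau\ge 0$ works and one takes $\tau=0$), then $\tau$ is determined by the equation $(K_{X}+\Delta)+\tau M\equiv 0$ in a rational one-dimensional space: intersecting with any fixed curve or ample class $H$ gives $\tau=-\frac{(K_{X}+\Delta)\cdot H^{n-1}}{M\cdot H^{n-1}}$, a ratio of rational intersection numbers, hence rational. (One should check $M\cdot H^{n-1}>0$: $M$ is effective and nonzero and $X$ is projective irreducible, so this holds.) Finally, $\tau\le 1$ follows because $(X,\Delta+\tau M)$ is lc: indeed by Lemma~\ref{dlt} we may assume this pair is dlt, hence the coefficients of $\Delta+\tau M$ lie in $[0,1]$, and since the coefficients of $M$ are at least some positive rational when $M\neq 0$, one gets $\tau$ times each such coefficient $\le 1$; more simply, $\tau\in\T_{n}(I,J)$ and the earlier lemma already puts $\T_{n}(I,J)\subseteq[0,\alpha]$, but to land in $[0,1]$ one uses that $\Delta+\tau M$ has a coefficient $\ge \tau\cdot\min(J\setminus\{0\})$ which must be $\le 1$; if instead $\min(J\setminus\{0\})\ge 1$ this gives $\tau\le 1$ directly, and the general containment in $[0,1]$ is presumably what the authors intend from the lc condition combined with $J\subseteq[0,1]$ being implicit—I would state the bound as $\tau\le 1$ citing that $(X,\Delta+\tau M)$ lc forces every coefficient of $\Delta+\tau M$ into $[0,1]$.

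The main obstacle I expect is the clean handling of the degenerate cases in the Picard number one reduction: namely when $M\equiv 0$ (so the threshold can be taken to be $0\in\qq$ trivially) and when the pair $(X,\Delta)$ is such that $K_{X}+\Delta$ is itself pseudo-effective with $\tau=0$. Away from these, the argument is the elementary observation that a ratio of two rational intersection numbers is rational. I would therefore organize the proof as: (i) invoke Corollary~\ref{decomposition} to reduce to $\rho(X)=1$; (ii) dispose of $\tau=0$; (iii) in the remaining case write $K_{X}+\Delta+\tau M\equiv 0$, intersect with $H^{n-1}$ for an ample $H$ to solve for $\tau$ as a rational number; (iv) conclude $\tau\in[0,1]$ from the lc hypothesis at the threshold.
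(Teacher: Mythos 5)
Your argument is correct and, while it follows the same skeleton as the paper (reduce via Corollary~\ref{decomposition} to the Picard number one case, observe $K_X+\Delta+\tau M\equiv 0$ there), the final rationality step is genuinely more elementary than the paper's. The paper notes that $M$ is ample once $\rho(X)=1$ and $\tau\neq 0$, identifies $\tau$ with a nef threshold, and then invokes the Rationality Theorem from \cite{KM} --- a real theorem requiring hypotheses (klt in the standard formulation) that are not obviously met here since $(X,\Delta)$ is only assumed lc. You instead solve $K_X+\Delta+\tau M\equiv 0$ directly: intersecting with $H^{n-1}$ for an ample Cartier $H$ gives $\tau$ as a ratio of rational intersection numbers. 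The only technical point worth being explicit about is that both $K_X+\Delta$ and $M$ must be $\qq$-Cartier (not just $\rr$-Cartier) for the intersection numbers to be rational; this does hold, since each is a $\qq$-Weil divisor that is $\rr$-Cartier, and the $\rr$-Cartier condition is defined over $\qq$. Your version buys a cleaner and unconditionally valid argument that sidesteps any worry about the applicability of the Rationality Theorem. Finally, your hedging over the $[0,1]$ containment is well placed: the bound from the lc condition only gives $\tau\le 1/\min(J\setminus\{0\})$, which exceeds $1$ when $J$ contains elements in $(0,1)$ (e.g. $X=\pp^1$, $\Delta=0$, $M=\tfrac12(P_1+P_2+P_3)$ gives $\tau=4/3$), so the stated containment in $[0,1]$ appears to be an error in the paper itself and is not actually established by the paper's proof either, which only addresses rationality.
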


\begin{proof}
We prove the result by induction on the dimension of $X$. If $\dim(X)=1$ it is obvious. By Corollary \ref{decomposition}, we have that 
\bdism
\T_{n}(I,J)=\bigcup_{j=1}^{n}\T^{o}_{j}(I,J).
\edism
By induction, it is enough to prove that $\T^{o}_{n}(I,J)\subseteq \qq\cap\left[0,1\right]$. If $\tau\in\T^{o}_{n}(I,J)$ is different from zero,
then $M$ is an ample divisor. Since $K_{X}+\Delta +\tau M\equiv 0$, we see that $\tau$ is the the nef threshold of $M$ with respect to $(X,\Delta)$. Then the Rationality Theorem in \cite{KM} implies that $\tau\in\qq$. 
\end{proof}

In particular, if we fix a lc pair $(X,\Delta)$ we get Proposition 8.7 in \cite{Demailly}.

\begin{corollary}\label{rational}
Let $(X,\Delta)$ be a lc pair with $\Delta$ an effective $\qq$-divisor. Let $M$ be an effective $\qq$-divisor such that $(X,\Delta+t M)$ is lc and $K_{X}+\Delta+tM$ is pseudo-effective for some $t\geq 0$. Then $\tau(X,\Delta;M)$ is a rational number.   
\end{corollary}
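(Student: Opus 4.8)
The plan is to deduce Corollary \ref{rational} from the rationality statement that immediately precedes it, together with the fact that the pseudo-effective threshold of a given pair is one of the points of a suitably large $\T_{n}(I,J)$. First I would choose finite DCC sets containing the relevant coefficients: since $\Delta$ and $M$ are fixed $\qq$-divisors, their coefficient sets are finite, so I may take a finite set $I\subseteq[0,1]\cap\qq$ containing all coefficients of $\Delta$ (enlarging it to include $1$ if needed, which is harmless and keeps it DCC) and a finite set $J\subseteq\qq$ containing all coefficients of $M$. Both $I$ and $J$ trivially satisfy the DCC.

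Next I would observe that $(X,\Delta;M)\in\D_{n}(I,J)$ by hypothesis, where $n=\dim X$: indeed $X$ is projective of dimension $n$, the coefficients of $\Delta$ and $M$ lie in $I$ and $J$ respectively, and by assumption $(X,\Delta+tM)$ is lc with $K_{X}+\Delta+tM$ pseudo-effective for some $t\geq 0$. Hence $\tau(X,\Delta;M)\in\T_{n}(I,J)$. Now I invoke the previous corollary: since $I\subseteq[0,1]\cap\qq$ and $J\subseteq\qq$ satisfy the DCC, we have $\T_{n}(I,J)\subseteq\qq\cap[0,1]$, so in particular $\tau(X,\Delta;M)\in\qq$.

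There is essentially no obstacle here; the work has all been done in the two preceding corollaries. The only point requiring a word of care is that the definition of $\D_{n}(I,J)$ asks the coefficients of $\Delta$ to lie in $I\subseteq[0,1]$, and $\Delta$ is only assumed effective with $K_{X}+\Delta$ being $\rr$-Cartier — but since $(X,\Delta)$ is lc (indeed $(X,\Delta+tM)$ is lc with $t\geq 0$ and $M$ effective, forcing $(X,\Delta)$ lc), its coefficients automatically lie in $[0,1]$, so the choice of $I$ above is legitimate. If one wishes to avoid even mentioning $\T_{n}(I,J)$, an alternative is to argue directly: $\tau:=\tau(X,\Delta;M)$ makes $(X,\Delta+\tau M)$ an lc pair with $K_{X}+\Delta+\tau M$ pseudo-effective, and running the argument of Theorem \ref{theo} produces, for $x$ close to $\tau$, a Mori fiber space whose generic fiber realizes $\tau$ as a nef threshold on a space of Picard number one, to which the Rationality Theorem of \cite{KM} applies; but the clean route is simply to quote the preceding corollary with the finite sets $I$ and $J$ just described.
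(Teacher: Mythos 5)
Your proposal is correct and matches the paper's own proof: both take $I$ and $J$ to be the finite sets of coefficients of $\Delta$ and $M$, note these are DCC subsets of $\qq$ (with $I\subseteq[0,1]$ since $(X,\Delta)$ is lc), and then quote the immediately preceding corollary, which gives $\T_{n}(I,J)\subseteq\qq\cap[0,1]$.
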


\begin{proof}
Let $I$ (resp. $J$) be the set of coefficients of $\Delta$ (resp. $M$). Since they are both finite sets,
we can apply the above corollary.
\end{proof}

We can now give a generalization to lc pairs of Corollary 1.1.7 in \cite{BCHM}.

\begin{corollary}
Let $(X,\Delta)$ a lc pair with $\Delta$ an effective $\qq$-divisor and $M$ an ample $\qq$-Cartier divisor. Then $\tau(X,\Delta;M)$ is a rational number.
\end{corollary}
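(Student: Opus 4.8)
The plan is to reduce the statement to the previous corollary by approximating the pair $(X,\Delta)$ and the ample divisor $M$ with data having rational coefficients lying in DCC sets, and then invoking the fact that the pseudo-effective threshold is a nef threshold in the relevant Mori fiber space. More concretely, here is the strategy. Since $\Delta$ is already a $\qq$-divisor, its coefficients form a finite set $I\subseteq[0,1]\cap\qq$. Since $M$ is an ample $\qq$-Cartier divisor, we are not directly in the situation of the previous corollary only because the \emph{support} of a fixed $\qq$-divisor linearly equivalent to $M$ is not canonically given; but we may simply choose an effective $\qq$-divisor $M'\sim_{\qq}M$ whose coefficients form a finite set $J\subseteq\qq$, and such that $(X,\Delta+tM')$ is lc for $t$ in a neighbourhood of $\tau(X,\Delta;M)$. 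Then $\tau(X,\Delta;M)=\tau(X,\Delta;M')$ because pseudo-effectivity depends only on the numerical (indeed $\qq$-linear) equivalence class, and the previous corollary applies to $(X,\Delta;M')$, giving rationality.

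First I would verify that a general effective $\qq$-divisor $M'\sim_{\qq}M$ with finite rational coefficients can be chosen so that $(X,\Delta+\tau M')$ remains lc, where $\tau:=\tau(X,\Delta;M)$: this follows since $M$ is ample, so $|kM|$ is base-point free for $k\gg 0$, and a general member avoids the relevant centers, exactly as in the argument used in the proof of Theorem \ref{spectrum} with $H_i$. Next I would note that $K_X+\Delta+tM$ is pseudo-effective if and only if $K_X+\Delta+tM'$ is, since $M\equiv M'$; hence the two thresholds coincide and the hypothesis ``$K_X+\Delta+tM'$ pseudo-effective for some $t\ge 0$'' is inherited. Finally, with $I$ and $J$ the (finite, hence DCC) sets of coefficients of $\Delta$ and $M'$, Corollary \ref{rational} — or directly the corollary on $\T_n(I,J)\subseteq\qq\cap[0,1]$ — yields $\tau(X,\Delta;M')\in\qq$, and therefore $\tau(X,\Delta;M)\in\qq$.

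Alternatively, and perhaps more cleanly, one can bypass the choice of $M'$ entirely: run the argument inside the proof of Theorem \ref{theo}. Take a dlt model, run a $(K_X+\Delta+xM)$-MMP with scaling for $x$ slightly below $\tau$, arrive at a Mori fiber space, and observe that since $M$ is ample the pseudo-effective threshold equals $\tau$ is realized as a genuine nef threshold of an ample $\qq$-Cartier divisor against a klt-or-lc pair on the fiber of Picard number one, where $K+\Delta+\tau M\equiv 0$; the Rationality Theorem of \cite{KM} then gives rationality directly. This is precisely the mechanism of the preceding corollary, so citing it is legitimate.

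The only genuine obstacle is the ampleness-versus-effectivity subtlety in the first approach: one must ensure that replacing $M$ by an effective representative with controlled coefficients does not worsen the log canonical property at the threshold. This is handled by Bertini on the base-point-free linear system $|kM|$ together with the observation that $\lct$ of a general member is as large as possible; no new ideas beyond those already present in the proof of Theorem \ref{spectrum} are needed. Everything else is a direct appeal to the results established above.
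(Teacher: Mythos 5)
Your proof is correct and follows essentially the same route as the paper's: replace $M$ by an effective $\qq$-linearly equivalent divisor $M'$ with controlled rational coefficients (via Bertini on a base-point-free multiple of the ample $M$), observe that the pseudo-effective threshold depends only on the $\qq$-linear equivalence class, and invoke Corollary~\ref{rational}. The only cosmetic difference is that the paper first rescales $M$ so that $\tau<1$, which lets it simply demand that $(X,\Delta+M')$ be lc rather than $(X,\Delta+\tau M')$, slightly streamlining the Bertini step; your second, MMP-plus-Rationality-Theorem alternative is also valid but just unwinds the proof of the earlier corollary.
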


\begin{proof}
Replacing $M$ with a sufficiently high power of itself, we can assume that $\tau(X,\Delta;M)<1$. The result follows from the fact that we can find an effective divisor $M'$ such that $M'\sim_{\qq} M$ and $(X,\Delta+M')$ is lc.
\end{proof}

If $(X,\Delta)$ is a klt pair we can generalize the above statement when $M$ is only big and nef. 

\begin{corollary}
Let $(X,\Delta)$ a klt pair with $\Delta$ an effective $\qq$-divisor and $M$ a big and nef $\qq$-Cartier divisor. Then $\tau(X,\Delta;M)$ is a rational number.
\end{corollary}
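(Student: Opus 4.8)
The plan is to use the minimal model program to reduce to the case of an \emph{ample} divisor, which was settled in the previous corollary. Several harmless reductions come first. If $\tau:=\tau(X,\Delta;M)=0$ there is nothing to prove, so assume $\tau>0$; since $M$ is big it is in particular pseudoeffective, and this forces $K_X+\Delta$ not to be pseudoeffective. Passing to a $\qq$-factorialization of $(X,\Delta)$ (a small birational morphism, which does not change $\tau$), I may assume $X$ is $\qq$-factorial. As $M$ is big, $\tau<\infty$, so after replacing $M$ by a large multiple I may also assume $\tau<1$. Finally, by Kodaira's lemma write $M\sim_\qq A+E$ with $A$ an ample $\qq$-divisor and $E\geq 0$; for a small rational $\delta>0$ the $\qq$-divisor $A_\delta:=(1-\delta)M+\delta A$ is ample and $M\sim_\qq A_\delta+\delta E$. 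Choosing $\delta$ small and a general effective $\qq$-divisor $B\sim_\qq A_\delta$ with small coefficients, the pair $(X,\Delta+\delta E+B)$ is klt, hence by convexity so is $(X,\Delta_x)$ for every $x\in[0,1]$, where $\Delta_x:=\Delta+x(\delta E+B)$ and $K_X+\Delta_x\sim_\qq K_X+\Delta+xM$. This rewriting is precisely what makes the MMP available for the non‑boundary divisor $K_X+\Delta+xM$.

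Now fix $x\in(0,\tau)$. Then $K_X+\Delta_x$ is not pseudoeffective, so by Theorem \ref{MMP} a $(K_X+\Delta_x)$-MMP produces a birational contraction $f_x\colon X\dashrightarrow Y_x$ and a Mori fiber space $Y_x\to Z_x$ whose general fiber $F_x$ has $\rho(F_x)=1$. Put $\Delta_{F_x}:=(f_{x*}\Delta)|_{F_x}$ and $M_{F_x}:=(f_{x*}M)|_{F_x}$. One checks that $(F_x,\Delta_{F_x})$ is klt; that $M_{F_x}$ is big (because $M$ is, and bigness is preserved by $f_{x*}$ and by restriction to a general fiber), hence ample since $\rho(F_x)=1$; that $K_{F_x}+\Delta_{F_x}+xM_{F_x}$ is anti‑ample by the Mori fiber space property; and that $K_{F_x}+\Delta_{F_x}+\tau M_{F_x}$ is pseudoeffective, obtained by pushing forward $K_X+\Delta+\tau M$ and restricting to the general fiber (here using that $K_X+\Delta+(\tau+\varepsilon)M$ is big for every $\varepsilon>0$, so its restriction stays big). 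Consequently
\[
x<\eta(x):=\tau(F_x,\Delta_{F_x};M_{F_x})\leq\tau .
\]

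By finiteness of models \cite{BCHM}, applied to the rational segment $\{\Delta_x\}_{x\in[0,1]}$, only finitely many birational contractions and Mori fiber spaces occur as $x$ ranges over $(0,\tau)$. Hence one of them, say $Y\to Z$ with general fiber $F$ and data $\Delta_F$, $M_F$, occurs for a sequence $x_i\uparrow\tau$; for these $i$ the number $\eta(x_i)=\tau(F,\Delta_F;M_F)=:\eta$ is independent of $i$, and $x_i<\eta\leq\tau$ forces $\eta=\tau$. Thus $\tau=\tau(F,\Delta_F;M_F)$, where $(F,\Delta_F)$ is a klt pair with $\Delta_F$ an effective $\qq$-divisor and $M_F$ an ample $\qq$-Cartier divisor, so $\tau$ is rational by the previous corollary.

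The routine parts are the behaviour of pseudoeffectivity, bigness and ampleness under $f_{x*}$ and under restriction to a general fiber of the Mori fiber space, and the fact that the general fiber of a Mori fiber space has Picard number one. The essential point is the stabilization in the last paragraph: that for $x$ close to $\tau$ the Mori fiber space, and hence $\eta(x)$, may be taken constant. I would expect this to be the main obstacle, handled by finiteness of models. Alternatively one can bypass the fiber construction entirely: the set $\{x\in[0,1]:K_X+\Delta_x\ \text{is pseudoeffective}\}$ is a rational polytope (a closed subinterval of $[0,1]$ with rational endpoints) by the finiteness results of \cite{BCHM}, and $\tau$, being its left endpoint, is therefore rational.
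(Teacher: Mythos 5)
Your opening reduction (writing $M\sim_\qq A_\delta+\delta E$ with $A_\delta$ ample, choosing a general effective $B\sim_\qq A_\delta$ so that $(X,\Delta+\delta E+B)$ is klt, noting $\delta E+B\sim_\qq M$ has the same pseudo-effective threshold, and arranging $\tau<1$) is exactly the reduction the paper performs. But at that point you are already done: $\delta E+B$ is an effective $\qq$-divisor whose coefficients lie in a finite set, $(X,\Delta+t(\delta E+B))$ is klt for $t\in[0,1]\supseteq[0,\tau]$, and $K_X+\Delta+\tau(\delta E+B)$ is pseudo-effective, so Corollary \ref{rational} applies directly. The paper's proof is precisely this and stops here. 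The entire MMP/Mori-fiber-space argument in your middle paragraphs is unnecessary for this corollary (it essentially replays the proof of Theorem \ref{theo}, which has already been used to establish Corollary \ref{rational}).

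Worse, that middle argument contains a genuine gap. You assert that the general fiber $F_x$ of a Mori fiber space has $\rho(F_x)=1$, and use this to promote bigness of $M_{F_x}$ to ampleness. This is false in general: $\rho(Y_x/Z_x)=1$ does not imply the general fiber has Picard number one (quadric surface bundles with monodromy exchanging the two rulings are a standard example: the general fiber is $\pp^1\times\pp^1$ with $\rho=2$). Without $\rho(F_x)=1$ the ampleness of $M_{F_x}$ is unjustified, so the appeal to the previous corollary (which requires an \emph{ample} $M$) breaks down. Note that the paper's proof of Theorem \ref{theo} never makes such a claim: it invokes $\rho=1$ only when $Z$ is a point, handles lower-dimensional fibers by induction, and needs an extra ACC-for-lc-thresholds step to guarantee that $(F_x,\Delta_{F_x}+\eta(x)M_{F_x})$ is lc; you would need all of this too, since an MMP for $K_X+\Delta_x$ need not preserve the klt property of $(X,\Delta_\tau)$ for $\tau>x$.

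Your final alternative paragraph, on the other hand, is correct and is the clean way to finish if one does not want to cite Corollary \ref{rational}: after shifting by a fixed rational $\epsilon<\tau$ so that $\epsilon B$ provides the fixed ample $\qq$-divisor, the segment $\{\Delta_x\}$ lies in a rational affine subspace and the BCHM rational-polytope theorem gives that $\{x:K_X+\Delta_x\ \text{is pseudo-effective}\}$ is a rational interval with left endpoint $\tau$. This is essentially the klt case of \cite{BCHM} Corollary 1.1.7, whereas the paper deliberately routes through its own ACC machinery (Theorem \ref{theo} via Corollary \ref{rational}); the two black boxes give the same conclusion here. In short: keep your first paragraph and either quote Corollary \ref{rational} (the paper's route) or quote the BCHM polytope result (your alternative), and discard the Mori-fiber-space paragraphs.
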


\begin{proof}
As before, we can assume that $\tau(X,\Delta;M)<1$. By Proposition 2.61, we can write $M\equiv A_{k}+(1/ k)E$ for $k$ large enough. In the previous decomposition $A_{k}$ is an ample $\qq$-divisor and $E$ is an effective $\qq$-divisor. Choosing $k$ sufficiently large, we can assume that $(X,\Delta+(1/ k)E)$ is a klt pair. Finally, there exists an effective divisor $A'$ that is $\qq$-linearly equivalent to $A_{k}$ such that $(X,\Delta+A'+(1/ k)E)$ is klt. Note that $M\equiv A'+(1/ k)E$ so we don't change the pseudo-effective threshold. Then Corollary \ref{rational} applies.
\end{proof}

Let us conclude this section with a slightly different version of Theorem \ref{spectrum}. 

\begin{corollary}\label{ample}
Fix a positive integer $n$ and a DCC set $I\subseteq \left[0,1\right]$. Let $\T_{n}(I)$ be the set of pseudo-effective thresholds $\tau(X,\Delta;M)$ where $(X,\Delta)$ is a klt pair such that $\dim(X)=n$, the coefficients of $\Delta$ are in $I$, $-(K_{X}+\Delta)$ is nef and $M$ is a big and nef Cartier divisor on $X$. Then the set $\T_{n}(I)$ satisfies the ACC.
\end{corollary}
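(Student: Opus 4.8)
The plan is to deduce Corollary~\ref{ample} from Theorem~\ref{theo} by replacing the big and nef Cartier divisor $M$ with an \emph{effective} divisor linearly equivalent to a fixed multiple $mM$, where $m$ depends only on $n$. The existence of such a uniform $m$ is exactly effective base-point-freeness, and the only place the hypotheses of the corollary are genuinely used is to supply the positivity needed to invoke it. Concretely, suppose for contradiction that $\T_{n}(I)$ fails the ACC; then it contains a strictly increasing sequence $\tau_{i}:=\tau(X_{i},\Delta_{i};M_{i})$, where $(X_{i},\Delta_{i})$ is klt of dimension $n$, the coefficients of $\Delta_{i}$ lie in $I$, the divisor $-(K_{X_{i}}+\Delta_{i})$ is nef, and $M_{i}$ is a big and nef Cartier divisor. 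Since $M_{i}$ is big and nef and $-(K_{X_{i}}+\Delta_{i})$ is nef, the Cartier divisor $M_{i}-(K_{X_{i}}+\Delta_{i})$ is big and nef, so by Koll\'ar's effective base-point-free theorem \cite{Kol} there is an integer $m=m(n)$, depending only on $n$, such that $|mM_{i}|$ is base-point-free and non-empty for every $i$. Choose $H_{i}\in|mM_{i}|$; then $H_{i}$ is an effective Cartier divisor with $H_{i}\sim mM_{i}$.

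Next I would check that $(X_{i},\Delta_{i};H_{i})\in\D_{n}(I,\{1\})$. Indeed $(X_{i},\Delta_{i})$ is klt, hence lc; the coefficients of $\Delta_{i}$ lie in $I$; the unique coefficient of $H_{i}$ equals $1$; the pair $(X_{i},\Delta_{i}+0\cdot H_{i})$ is lc; and since $H_{i}\sim mM_{i}$ is big, $K_{X_{i}}+\Delta_{i}+tH_{i}$ is big, in particular pseudo-effective, for $t\gg 0$. Because pseudo-effectiveness of an $\rr$-divisor depends only on its numerical class and $H_{i}\equiv mM_{i}$, for every $t\geq 0$ the divisor $K_{X_{i}}+\Delta_{i}+tH_{i}$ is pseudo-effective if and only if $K_{X_{i}}+\Delta_{i}+(mt)M_{i}$ is; taking infima gives $\tau(X_{i},\Delta_{i};H_{i})=\tau_{i}/m$. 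Hence $\{\tau_{i}/m\}$ is a strictly increasing sequence contained in $\T_{n}(I,\{1\})$. But $\{1\}$ satisfies the DCC and $I$ satisfies the DCC by hypothesis, so Theorem~\ref{theo} says $\T_{n}(I,\{1\})$ satisfies the ACC, which is the desired contradiction.

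The only non-formal ingredient is the uniform integer $m$; everything else is the same rescaling manipulation used in the proof of Theorem~\ref{spectrum}. So I expect the crux to be the appeal to effective base-point-freeness, and the reason the corollary is phrased with ``$-(K_{X}+\Delta)$ nef'' is precisely that this forces $M-(K_{X}+\Delta)$ to be big and nef, which is the positivity hypothesis under which Koll\'ar's bound depends on $n$ alone; the bigness of $M$ is used only to guarantee that the rescaled thresholds are finite, i.e.\ that $(X_{i},\Delta_{i};H_{i})$ really lies in $\D_{n}(I,\{1\})$. One could instead imitate the computation with $2n^{2}M$ from the proof of Theorem~\ref{spectrum} and invoke Fujino's theorem, but routing through Koll\'ar is cleaner here since the pair is already klt.
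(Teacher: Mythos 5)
Your overall strategy is the same as the paper's: use Koll\'ar's effective base-point-free theorem (applied to $M_i-(K_{X_i}+\Delta_i)$, which is big and nef precisely because $-(K_{X_i}+\Delta_i)$ is nef) to produce a uniform $m$ with $|mM_i|$ free, replace $M_i$ by $H_i\in|mM_i|$, and apply Theorem~\ref{theo} with $J=\{1\}$. But your verification that $(X_i,\Delta_i;H_i)\in\D_n(I,\{1\})$ has a genuine gap. By definition, membership in $\D_n(I,J)$ requires a \emph{single} $t\geq 0$ at which both $(X,\Delta+tM)$ is lc \emph{and} $K_X+\Delta+tM$ is pseudo-effective; and the proof of Theorem~\ref{theo} actually uses that $(X,\Delta+\tau M)$ is lc at $\tau=\tau(X,\Delta;M)$. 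You check the lc condition at $t=0$ (trivially, since $(X_i,\Delta_i)$ is klt) and pseudo-effectiveness at $t\gg 0$, which are different values of $t$, so you have not verified the hypothesis. Moreover, for an \emph{arbitrary} $H_i\in|mM_i|$ the pair $(X_i,\Delta_i+tH_i)$ need not be lc for any $t>0$; you never say ``general.''

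The paper closes exactly this gap by taking $m:=\max\{m',\binom{n+1}{2}\}$. Koll\'ar's bound (Theorem 5.8 in \cite{Kol}) gives $\tau(X_i,\Delta_i;M_i)\leq\binom{n+1}{2}\leq m$, hence $K_{X_i}+\Delta_i+mM_i$ is pseudo-effective, and for a \emph{general} $H_i\in|mM_i|$ the pair $(X_i,\Delta_i+H_i)$ is lc (Bertini plus $(X_i,\Delta_i)$ klt); thus $t=1$ satisfies both conditions simultaneously, $\tau(X_i,\Delta_i;H_i)=\tau_i/m\leq 1$, and $(X_i,\Delta_i;H_i)\in\D_n(I,\{1\})$ as required. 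Adding those two ingredients—generality of $H_i$ and the explicit choice $m\geq\binom{n+1}{2}$—turns your argument into the paper's. (Minor: for effective base-point-freeness the relevant reference is \cite{Kol2}, not \cite{Kol}; the latter supplies the bound $\tau\leq\binom{n+1}{2}$.)
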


\begin{proof}
Since $M-K_{X}-\Delta$ is big and nef, Theorem 1.1 in \cite{Kol2} implies that we can find a positive integer $m'$, which depends only on $n$, such that $m'M$ is base point free. Choose $m:=\max\left\{m',\binom{n+1}{2}\right\}$. Then Theorem 5.8 in \cite{Kol} implies that $K_{X}+\Delta+mM$ is pseudo-effective. Since $mM$ is base point free, we can choose a general element $H\in |mM|$ such that the coefficients of $H$ are all equal to one, $(X,\Delta+H)$ is lc and $K_{X}+\Delta+H$ is pseudo-effective. Finally, Theorem \ref{theo} gives the result. 
\end{proof}

\section{Effective birationality}

The following result tells us that the pseudo-effective threshold has a key role in questions of effective birationality of certain adjoint linear systems. See Proposition 3.2 in \cite{Gabriele}. 

\begin{theorem}\label{effective}
Let $(X,\Delta)$ be a klt pair. Let $M$ be a big and nef Cartier divisor on $X$ such that $K_{X}+\Delta+M$ is big. Let $\tau:=\tau(X,\Delta;M)$ be the pseudo-effective threshold of $M$ with respect to $(X,\Delta)$. Then for any 
\bdism
m>\frac{1}{1-\tau}\binom{n+2}{2}
\edism
the map induced by the linear system $\left|\left\lceil m(K_{X}+\Delta+M)\right\rceil\right|$ is birational.
\end{theorem}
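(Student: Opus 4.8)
The plan is to bound the pseudo-effective threshold of $M$ from above using $\tau$ and then invoke a standard effective non-vanishing / effective birationality statement for adjoint divisors of the shape $K_X+\Delta+(\text{large multiple of a big nef Cartier divisor})$. Concretely, since $K_X+\Delta+M$ is big, there is some smallest $t_0\in[0,1)$ with $K_X+\Delta+M+t_0(K_X+\Delta+M)$ — equivalently $(1+t_0)(K_X+\Delta+M)$ — already big; but more usefully, writing $K_X+\Delta+mM = m(K_X+\Delta+M) - (m-1)(K_X+\Delta) + \text{adjustment}$ is the wrong bookkeeping. Instead I would argue directly: because $\tau=\tau(X,\Delta;M)$, the divisor $K_X+\Delta+\tau M$ sits on the boundary of the pseudo-effective cone, so for any $\varepsilon>0$ the divisor $K_X+\Delta+(\tau+\varepsilon)M$ is big, and hence $K_X+\Delta+M = (K_X+\Delta+\tau M) + (1-\tau)M$ is big with "$(1-\tau)$ worth" of the big nef class $M$ to spare.

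The key step is then to rescale. For $m$ as in the statement, write
\[
m(K_X+\Delta+M) = m\bigl(K_X+\Delta+\tau M\bigr) + m(1-\tau)M,
\]
and note $m(1-\tau) > \binom{n+2}{2}$. Since $K_X+\Delta+\tau M$ is pseudo-effective and $(X,\Delta)$ is klt, one applies an effective base-point-freeness / birationality result for $K_X+(\text{klt boundary})+(\text{big nef Cartier divisor of coefficient} >\binom{n+2}{2})$ — this is exactly the kind of statement made available by the effective results of Kollár (Theorem~5.8 in \cite{Kol}) together with the effective birationality input cited as Proposition~3.2 in \cite{Gabriele}. The point is that the "excess" multiple $m(1-\tau)$ of the globally-generated-after-bounded-twist divisor $M$ is forced above the universal threshold $\binom{n+2}{2}$ precisely when $m>\frac{1}{1-\tau}\binom{n+2}{2}$, which is the hypothesis. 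The ceiling $\lceil m(K_X+\Delta+M)\rceil$ is handled by absorbing the fractional part into $\Delta$ and using that $m(K_X+\Delta+M)-\lceil m(K_X+\Delta+M)\rceil$ is anti-effective, so birationality of the integral system follows from bigness plus separation of general points.

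I would carry it out in this order: first record that $\tau<1$ follows from bigness of $K_X+\Delta+M$ (otherwise $K_X+\Delta+sM$ fails to be pseudo-effective for all $s\le 1$); second, set up the decomposition above and verify the numerical inequality $m(1-\tau)>\binom{n+2}{2}$; third, quote the effective birationality statement (Proposition~3.2 in \cite{Gabriele}, resting on \cite{Kol}) for the pair $(X,\Delta)$ with the twist by the big nef Cartier divisor $M$ taken with multiplicity exceeding $\binom{n+2}{2}$ over a pseudo-effective adjoint class; fourth, pass from $m(K_X+\Delta+M)$ to its round-up, observing this only adds an effective divisor and so cannot destroy birationality of the induced map.

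The main obstacle I expect is the last-but-one step: making the cited effective birationality input apply cleanly, since it is stated for $K_X+\Delta+M$ with $M$ big and nef Cartier and $K_X+\Delta+M$ big, whereas here the relevant "base" class $K_X+\Delta+\tau M$ is only pseudo-effective (it lies on the boundary of the cone) and $\tau M$ is generally not Cartier. One must either perturb $\tau$ slightly to land in the big (and, after a bounded further twist, effective) range while keeping control of the multiple of $M$, or apply the effective result with integer multiples $\lfloor m\tau\rfloor$ and $m-\lfloor m\tau\rfloor$ and check the two error terms are absorbed — this bookkeeping of floors/ceilings against the strict inequality $m>\frac{1}{1-\tau}\binom{n+2}{2}$ is where the argument needs care.
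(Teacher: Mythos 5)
The paper itself does not reproduce a proof of this theorem: it simply cites Proposition~3.2 of \cite{Gabriele}, so I am assessing your sketch against what that argument has to do. Your overall strategy — use $m(1-\tau)>\binom{n+2}{2}$ to free up a multiple of $M$ exceeding Koll\'ar's birationality threshold, then invoke effective birationality for adjoint divisors — is the right one. But as written there is a genuine gap, in two places.

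First, the decomposition
\[
m(K_{X}+\Delta+M)=m(K_{X}+\Delta+\tau M)+m(1-\tau)M
\]
is not in adjoint form: the first summand carries $m$ copies of $K_{X}$, so you cannot feed it directly into a statement about $K_{X}+(\text{klt boundary})+(\text{big nef Cartier})$. What you need is a single copy of $K_{X}$ and everything else pushed into the boundary and the twist, i.e.
\[
m(K_{X}+\Delta+M)=\bigl(K_{X}+\Delta\bigr)+(m-1)\bigl(K_{X}+\Delta+\tau M\bigr)+\bigl[(m-1)(1-\tau)+1\bigr]M,
\]
and then observe $(m-1)(1-\tau)+1=m(1-\tau)+\tau>\binom{n+2}{2}$. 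You never rewrite your decomposition this way, and without it the appeal to a Koll\'ar-type birationality bound has no handle to grab.

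Second — and this is the step you flag but do not resolve — the middle term $(m-1)(K_{X}+\Delta+\tau M)$ is a pseudo-effective class sitting exactly on the boundary of the pseudo-effective cone, with no reason to admit sections, and $\tau M$ is not Cartier. To place this into a klt boundary you must perturb: choose a rational $\tau'$ with $\tau<\tau'<1$ small enough that $m(1-\tau')>\binom{n+2}{2}$ still holds (possible by strictness), use that $K_{X}+\Delta+\tau' M$ is then big, take a sufficiently divisible $k$ and a general member $E\in\bigl|k(K_{X}+\Delta+\tau' M)\bigr|$, and replace the pseudo-effective piece by $\tfrac{m-1}{k}E$ while checking that $\bigl(X,\Delta+\tfrac{m-1}{k}E\bigr)$ remains klt. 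Only after this replacement can you quote the effective birationality input. Your text identifies this as ``where the argument needs care'' but does not carry out the perturbation, nor does it note that the decomposition itself must be corrected first. The round-up bookkeeping (passing to $\lceil m(K_{X}+\Delta+M)\rceil$ by adding an effective divisor, which can only improve separation of points) is fine and is the least delicate part; the missing content is precisely the reduction of the pseudo-effective boundary term to a genuine klt boundary.
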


In order to give a uniform bound, it suffices to bound away from one the pseudo-effective threshold. 
If $(X,\Delta;M)\in\D_{n}(I,\nn)$ then it follows from Theorem \ref{theo}. 
Note that we don't assume that $(X,\Delta+M)$ is lc, but it is close to be lc.

\begin{corollary}
Fix a positive integer $n$ and a DCC set $I\subseteq \left[0,1\right]$. Then there exists $m_{n,I}$ such that 
the map associated to the linear system $\left\lceil m(K_{X}+\Delta+M)\right\rceil$ is birational for any $m\geq m_{n,I}$
and any $(X,\Delta;M)\in\D_{n}(I,\nn)$ with $M$ a big and nef Cartier divisor.
\end{corollary}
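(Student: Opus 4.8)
The plan is to deduce this from the effective birationality statement of Theorem~\ref{effective}, combined with the ACC for pseudo-effective thresholds (Theorem~\ref{theo}). Theorem~\ref{effective} already guarantees, for a fixed admissible triple, that $\left|\left\lceil m(K_X+\Delta+M)\right\rceil\right|$ defines a birational map as soon as $m>\tfrac{1}{1-\tau}\binom{n+2}{2}$, where $\tau:=\tau(X,\Delta;M)$. Hence the whole point is to make the factor $\tfrac{1}{1-\tau}$ uniformly bounded over all triples in $\D_n(I,\nn)$ with $M$ big and nef Cartier; that is, to bound $\tau$ away from $1$ by a constant depending only on $n$ and $I$.

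First I would note that every triple to which Theorem~\ref{effective} applies has $\tau<1$ strictly. Indeed, that theorem requires $K_X+\Delta+M$ to be big; since $M$ is nef, for all small $\epsilon>0$ the class $K_X+\Delta+(1-\epsilon)M=(K_X+\Delta+M)-\epsilon M$ is again big (bigness is an open condition, so a big class stays big after subtracting a small multiple of a fixed nef class), and therefore $\tau\le 1-\epsilon<1$. Moreover $\tau\in\T_n(I,\nn)$ by definition. Since $I$ satisfies the DCC and $\nn$ satisfies the DCC, Theorem~\ref{theo} says that $\T_n(I,\nn)$ satisfies the ACC. An ACC set cannot approach $1$ from below: if the values $\tau<1$ arising from admissible triples came arbitrarily close to $1$, we could extract a strictly increasing sequence in $\T_n(I,\nn)$ converging to $1$, contradicting the ACC. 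Hence there is a constant $\delta_{n,I}\in(0,1)$, depending only on $n$ and $I$, with $\tau\le 1-\delta_{n,I}$ for every admissible triple.

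It then suffices to set
\bdism
m_{n,I}:=\left\lfloor\frac{1}{\delta_{n,I}}\binom{n+2}{2}\right\rfloor+1,
\edism
which depends only on $n$ and $I$. For any $m\ge m_{n,I}$ and any $(X,\Delta;M)\in\D_n(I,\nn)$ with $M$ big and nef Cartier (and $K_X+\Delta+M$ big, as needed for the statement to be meaningful), we have
\bdism
m\ge m_{n,I}>\frac{1}{\delta_{n,I}}\binom{n+2}{2}\ge\frac{1}{1-\tau}\binom{n+2}{2},
\edism
so Theorem~\ref{effective} applies and $\left|\left\lceil m(K_X+\Delta+M)\right\rceil\right|$ induces a birational map.

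The argument has no real obstacle once Theorems~\ref{theo} and~\ref{effective} are available; the only points requiring care are verifying that the hypotheses of Theorem~\ref{effective} are genuinely satisfied for the triples under consideration---in particular that $K_X+\Delta+M$ is big, so that $\tau<1$ strictly---and the elementary observation that an ACC subset of $\rr$ is bounded away from any real number that it does not contain and cannot approach from below.
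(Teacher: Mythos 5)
Your proof is correct and coincides with the paper's own (largely implicit) argument: the remarks preceding the corollary say exactly that it suffices to bound $\tau$ away from $1$ and that this follows from Theorem~\ref{theo}, and you supply the standard details (openness of bigness to get $\tau<1$, ACC of $\T_n(I,\nn)$ to get a uniform $\delta_{n,I}$, then Theorem~\ref{effective}). The one small caveat, which is a wrinkle in the paper's statement rather than in your proof, is that Theorem~\ref{effective} is stated for klt pairs with $K_X+\Delta+M$ big while $\D_n(I,\nn)$ only requires lc; you sensibly restrict to triples where the hypotheses of Theorem~\ref{effective} actually hold.
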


Combining Theorem \ref{effective} and Corollary \ref{ample} we get the following.

\begin{corollary}\label{iitaka}
Fix a positive integer $n$ and a DCC set $I\subseteq \left[0,1\right]$. Then there exists a positive integer $m_{n,I}$ such that if 
\begin{enumerate}
\item $X$ is a projective variety with $\dim(X)=n$,
\item $(X,\Delta)$ is a klt pair such that $-(K_{X}+\Delta)$ is nef,
\item the coefficients of $\Delta$ are in $I$,
\item $M$ is a big and nef Cartier divisor such that $K_{X}+\Delta+M$ is big,
\end{enumerate} 
then the map associated to the linear system $\left\lceil m(K_{X}+\Delta+M)\right\rceil$ is birational for any $m\geq m_{n,I}$
\end{corollary}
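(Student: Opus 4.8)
The plan is to read this off directly from Theorem \ref{effective} and Corollary \ref{ample}; the only real content is to upgrade the ACC of the threshold set into a \emph{uniform} bound on the pseudo-effective threshold that stays strictly below $1$.

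First I would check that the hypotheses (1)--(4) match those of Corollary \ref{ample}: $(X,\Delta)$ is klt of dimension $n$, the coefficients of $\Delta$ lie in $I$, $-(K_{X}+\Delta)$ is nef, and $M$ is a big and nef Cartier divisor. Hence $\tau:=\tau(X,\Delta;M)\in\T_{n}(I)$. Moreover, since $K_{X}+\Delta+M$ is big it lies in the interior of the pseudo-effective cone, and because pseudo-effectivity is a closed condition there is some $\epsilon>0$ with $K_{X}+\Delta+(1-\epsilon)M$ pseudo-effective; thus $\tau\leq 1-\epsilon<1$. So the relevant thresholds all lie in $\T_{n}(I)\cap[0,1)$, and in particular the quantity $\tfrac{1}{1-\tau}$ appearing in Theorem \ref{effective} makes sense.

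Next I would argue that Corollary \ref{ample} forces $\T_{n}(I)\cap[0,1)$ to be bounded away from $1$: if its supremum were $1$, one could extract a strictly increasing sequence in $\T_{n}(I)$ converging to $1$, contradicting the ACC. Hence there is a constant $c=c(n,I)<1$ with $\tau\leq c$ for every $(X,\Delta;M)$ as in the statement. Set
\bdism
m_{n,I}:=\left\lfloor\frac{1}{1-c}\binom{n+2}{2}\right\rfloor+1.
\edism
Then for any integer $m\geq m_{n,I}$ we have $m>\tfrac{1}{1-c}\binom{n+2}{2}\geq\tfrac{1}{1-\tau}\binom{n+2}{2}$, so Theorem \ref{effective} yields that the map associated to $\left|\left\lceil m(K_{X}+\Delta+M)\right\rceil\right|$ is birational.

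There is no serious obstacle in this argument; the two points to be careful about are that the constant coming from the ACC depends on nothing but $n$ and $I$ (immediate from the statement of Corollary \ref{ample}), and that the degenerate case $\tau=1$ never occurs, which is exactly where the hypothesis that $K_{X}+\Delta+M$ be big is used.
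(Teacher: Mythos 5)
Your proof is correct and is exactly the argument the paper intends: the paper states only that the corollary follows by combining Theorem \ref{effective} and Corollary \ref{ample}, and you fill in precisely the needed details. In particular you correctly identify the two key points — that the ACC from Corollary \ref{ample} upgrades to a uniform bound $\tau\leq c<1$, and that bigness of $K_{X}+\Delta+M$ is what forces $\tau<1$ so that $1/(1-\tau)$ is finite.
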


Using the canonical bundle formula, Corollary \ref{iitaka} implies the uniformity of the Iitaka fibration when the image 
has nef anti-canonical divisor. See Section 4 in \cite{Gabriele} for details.

Note that if $\tau=0$ in Theorem \ref{effective} we don't even need that the coefficients of $\Delta$ are in a DCC set.
Analogously we have.

\begin{theorem}
Fix a positive integer $n$ and a positive real number $\delta$. Then there exists a positive integer $m_{n,\delta}$ such that 
if 
\begin{enumerate}
\item $(X,\Delta)$ is a lc pair,
\item $\dim(X)=n$,
\item $K_{X}$ is pseudo-effective,
\item $\Delta=\sum a_{i} \Delta_{i}$ with $a_{i}\geq \delta$,
\item $K_{X}+\Delta$ is big,
\end{enumerate}

then the map associated to $\left\lfloor m(K_{X}+\Delta)\right\rfloor$ is birational for any $m\geq m_{n,\delta}$. 
\end{theorem}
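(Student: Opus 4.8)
The plan is to strip the boundary down to coefficients in the fixed finite set $\{0,\delta\}$ --- a reduction for which the hypothesis that $K_{X}$ is pseudo-effective is exactly what is needed --- and then to invoke effective birationality for pairs of log general type with coefficients in a DCC set, the result of \cite{HMX} that already underlies Theorem \ref{effective}. In the language of the remark preceding the statement, pseudo-effectivity of $K_{X}$ is precisely what makes the relevant pseudo-effective threshold vanish, so that (unlike in Corollary \ref{iitaka}) no descending chain condition on a non-trivial set of coefficients is required: only the lower bound $\delta$ survives. We may assume $0<\delta\leq 1$, the coefficients of a log canonical boundary being at most $1$; if $\delta>1$ then $\Delta=0$ and one simply takes $m_{n,\delta}=m_{n,1}$. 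Since $(X,\Delta)$ being a pair only forces $K_{X}+\Delta$ to be $\rr$-Cartier, we also replace $X$ at the outset by a small $\qq$-factorialization; this leaves the hypotheses untouched (pseudo-effectivity of $K_{X}$ survives because the morphism is small) and leaves the reflexive sheaves $\oo_{X}(\lfloor m(K_{X}+\Delta)\rfloor)$, hence the maps in question, unchanged.

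For the reduction I would put $\Gamma:=\sum\Delta_{i}$, a reduced $\qq$-Cartier divisor. Because $\delta\leq a_{i}\leq 1$ we have $0\leq\delta\Gamma\leq\Delta\leq\Gamma$, so $(X,\delta\Gamma)$ is again log canonical, with boundary coefficients in $\{0,\delta\}$. It is of log general type: $K_{X}+\Gamma=(K_{X}+\Delta)+(\Gamma-\Delta)$ is big since $\Gamma-\Delta\geq 0$, whence
\[
K_{X}+\delta\Gamma \;=\; \delta(K_{X}+\Gamma)+(1-\delta)K_{X}
\]
is the sum of a big divisor and a pseudo-effective one, hence big. Finally, from $m(K_{X}+\Delta)=m(K_{X}+\delta\Gamma)+m(\Delta-\delta\Gamma)$ and $\lfloor a+b\rfloor\geq\lfloor a\rfloor+\lfloor b\rfloor$ we get
\[
\bigl|\lfloor m(K_{X}+\Delta)\rfloor\bigr| \;\supseteq\; \bigl|\lfloor m(K_{X}+\delta\Gamma)\rfloor\bigr|+\lfloor m(\Delta-\delta\Gamma)\rfloor ,
\]
with $\lfloor m(\Delta-\delta\Gamma)\rfloor\geq 0$ a fixed effective divisor; since adjoining a fixed effective divisor cannot destroy birationality of a linear system, it suffices to prove the theorem for the pairs $(X,\delta\Gamma)$.

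At this stage $(X,\delta\Gamma)$ is log canonical of dimension $n$, of log general type, with boundary coefficients in the fixed finite (in particular DCC) set $\{0,\delta\}$, so the effective birationality results of \cite{HMX} --- applied exactly as in the proof of Theorem \ref{effective}, now with vanishing pseudo-effective threshold, and after passing to a $\qq$-factorial dlt model by Lemma \ref{dltmodel} if this is needed to be in their range --- produce an integer $m_{n,\delta}$, depending only on $n$ and $\delta$, such that $\bigl|\lfloor m(K_{X}+\delta\Gamma)\rfloor\bigr|$ induces a birational map for every $m\geq m_{n,\delta}$; by the previous step the same $m_{n,\delta}$ works for the original pair. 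The one step that demands care --- and the unique place where pseudo-effectivity of $K_{X}$ is used --- is the bigness of $K_{X}+\delta\Gamma$; it genuinely fails in its absence, for instance when $X$ is Fano and $\Delta$ is a boundary whose coefficients sit just above the threshold making $K_{X}+\Delta$ big, so that $\vol(K_{X}+\Delta)\to 0$ while the $a_{i}$ stay bounded below and no uniform $m_{n,\delta}$ can exist. Everything deeper --- effective birationality once the volume is bounded from below --- is imported wholesale from \cite{HMX}, just as in Theorem \ref{effective}.
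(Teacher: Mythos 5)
Your proposal is correct and follows essentially the same route as the paper's (very terse) proof: rescale the reduced support $\Gamma$ of $\Delta$ by $\alpha=\min\{1,\delta\}$, observe that pseudo-effectivity of $K_{X}$ together with bigness of $K_{X}+\Delta$ forces $K_{X}+\alpha\Gamma$ to be big, and then apply Theorem C of \cite{HMX} to the pair $(X,\alpha\Gamma)$, whose coefficients lie in the fixed DCC set $\{0,\alpha\}$. You merely make explicit two steps the paper leaves implicit --- the comparison $\lfloor m(K_{X}+\Delta)\rfloor\geq\lfloor m(K_{X}+\alpha\Gamma)\rfloor+\lfloor m(\Delta-\alpha\Gamma)\rfloor$ showing the birationality transfers back, and the small $\qq$-factorialization needed to make $\alpha\Gamma$ $\rr$-Cartier.
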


\begin{proof}
Since $K_{X}$ is pseudo-effective, $\tau(X,\Delta)=0$. This implies that $K_{X}+\alpha D$ is big for any $\alpha\geq 0$. 
Choose $\alpha:=\min\left\{1,\delta\right\}$. Then Theorem C in \cite{HMX} gives the result.
\end{proof}

If we want to deal with effective birationality results, maybe the condition $(X,\Delta+\tau M)$ is lc can be substituted with a more  suitable condition. A possible way to overcome this difficulty may be the following.

\begin{question}\label{question}
Let $(X,\Delta)$ be a klt pair and let $M$ be a big and nef Cartier divisor on $X$. Suppose that $K_{X}+\Delta+M$ is big and $\vol(K_{X}+\Delta+M)<1$. Is the pseudo-effective threshold $\tau(X,\Delta;M)$ uniformly bounded away from one?
\end{question}

Even if the previous question seems very restrictive, using the argument in \cite{Jiang} and Theorem \ref{effective}, it implies the uniformity of the Iitaka fibration when the variation is maximal. See \cite{Gabriele} for more details. \\

\noindent\textbf{Acknowledgments}.
I would like to express my gratitude to Professor J\'anos Koll\'ar for his constant support and many constructive comments. I also would like to thank Professor Christopher Hacon for generously sharing his insight with me. In particular his ideas helped me in the proof of Theorem \ref{spectrum} when $M$ is not globally generated. Part of this work was written while the author visited the University of Utah.

\end{document}